\definecolor{dblue}{rgb}{0.09,0.32,0.44} 
\newtheorem {theorem}{Theorem}
\newtheorem {lemma}{Lemma}
\newtheorem {proposition}{Proposition}
\theoremstyle{remark}
\def \N {\mathbb N}
\def \R {\mathbb R}
\def\boP{\mathbf{P}}
\def\cC{\mathcal{C}}
\def\cF{\mathcal{F}}
\def\cO{\mathcal{O}}
\def\vareps{\varepsilon}
\newcommand{\probab}[1]{\ensuremath{\mathbf{P}\big(#1\big)}}
\newcommand{\expect}[1]{\ensuremath{\mathbf{E}\big(#1\big)}}
\newcommand{\condprobab}[2]{\ensuremath{\mathbf{P}\big(#1\bigm|#2\big)}}
\newcommand{\condexpect}[2]{\ensuremath{\mathbf{E}\big(#1\bigm|#2\big)}}
\newcommand{\qprobab}[1]{\ensuremath{\mathbf{P}_{\varpi}\big(#1\big)}}
\newcommand{\qexpect}[1]{\ensuremath{\mathbf{E}_{\varpi}\big(#1\big)}}
\newcommand{\ind}[1]{\ensuremath{\mathbbm{1}{\{#1\}}}}
\def\clap#1{\hbox to 0pt{\hss#1\hss}}
\def \Ordo {\cO}
\def\ordo{o}
\newcommand{\abs}[1]{\ensuremath |{#1} |}
\newcommand{\norm}[1]{\ensuremath\|{#1}\|}
\def \wt {\widetilde}
\begin{document}

\title{Semi-Quenched Invariance Principle for the Random Lorentz Gas -- Beyond the Boltzmann-Grad Limit}

\author{
{\sc B\'alint T\'oth}
\\[8pt]
R\'enyi Institute, Budapest, HU
and
University of Bristol, UK}

\maketitle

\begin{abstract}
\noindent
By synchronously coupling multiple Lorentz trajectories exploring the same environment consisting of randomly placed scatterers in $\R^3$ we upgrade the annealed invariance principle proved in \cite{lutsko-toth-2020} to quenched setting (that is, valid for almost all realizations of the environment) along sufficiently fast extractor sequences.  

\medskip\noindent
{\sc MSC2010:}
60F17; 60K35; 60K37; 60K40; 82C22; 82C31; 82C40; 82C41

\medskip\noindent
{\sc Key words and phrases:} 
Lorentz-gas; invariance principle; scaling limit; coupling; almost sure convergence 

\end{abstract}

 \bigskip
 
\centerline{\sl R\'ev\'esz Pali eml\'ek\'ere}
\centerline{\sl (Dedicated to the memory of P\'al R\'ev\'esz)}

\bigskip

\section{Introduction}
\label{s:Introduction}

Since the late 1970s random walks in random environment (RWRE) have been a central subject of major interest and difficulty within the probability community. See, e.g., P\'al R\'ev\'esz's classic monograph \cite{revesz-1990}. One should keep within sight, however, the original motivation of RWRE: the urge for understanding diffusion in true physical systems. An archetypical example being the random Lorentz gas, where in the three-dimensional Euclidean space $\R^3$, a point-like particle of mass $1$ moves among infinite-mass, hard-core, spherical scatterers of radius $r$, placed according to a Poisson point process of density $\varrho$. Randomness comes with the placement of the scatterers (PPP in $\R^3$) and the initial direction of the velocity of the moving particle (uniform in an angular domain). Otherwise, the dynamics is fully deterministic. The question is whether in the long run the displacement of the moving particle is random-walkish or not. In \cite{lutsko-toth-2020} we proved an invariance principle for the Lorentz trajectory, under the Boltzmann-Grad (a.k.a. low density) limit \emph{and simultaneous} diffusive scaling, valid in the annealed sense. (For precise formulation see Theorem \ref{thm:aip} below.) The objective of this note is upgrading that result to semi-quenched setting. That is, valid for almost all realizations of the environment, along sufficiently fast extractor sequences. 

Let $(\Omega, \cF, \boP)$ be a sufficiently large probability space which supports (inter alia) a Poisson Point Process (PPP) of intensity $1$ on $\R^d$, denoted $\varpi$. Other, independent random elements jointly defined on $(\Omega, \cF, \boP)$ will also be considered later. Therefore, it is best to think about $(\Omega, \cF, \boP)$ as a product space which in one of its factors supports the PPP $\varpi$ and on the other factor (or factors) many other random elements, independent of $\varpi$, to be introduced later. To keep notation simple we do not denote explicitly this product structure of  $(\Omega, \cF, \boP)$. However, as this note is about \emph{quenched} laws, that is about laws and limits conditioned on \emph{typical} $\varpi$, we denote
\begin{align*}
\qprobab{\cdot}
:=
\condprobab{\cdot}{\cF_{\mathrm{PPP}}}, 
\qquad
\qexpect{\cdot}
:=
\condexpect{\cdot}{\cF_{\mathrm{PPP}}}, 
\end{align*}
where $\cF_{\mathrm{PPP}}\subset\cF$ is the sigma algebra generated by the PPP $\varpi$.

Given 
\begin{align*}
\vareps>0,
\qquad 
r=r_{\vareps}:=\vareps^{d/(d-1)}
\end{align*}
and 
\begin{align*}
v\in S^{d-1} :=\{u\in \R^d, \abs{u}=1\}
\end{align*}
let
\begin{align*}
t\mapsto X_{\varepsilon}(t)\in\R^{d}
\end{align*}
be the Lorentz trajectory among fixed spherical scatterers of radius $r$ centred at the points of the rescaled PPP 
\begin{align}
\label{varpieps}
\varpi_\varepsilon:=\{\varepsilon q: q\in\varpi, \  \abs{q} > \vareps^{-1}r=\vareps^{1/(d-1)}\},
\end{align}
with initial conditions 
\begin{align*}
X_{\varepsilon}(0)=0, 
\qquad 
\dot X_{\varepsilon}(0)=v.
\end{align*}
In plain words: $t\mapsto X_\varepsilon(t)$ is the trajectory of a point particle starting from the origin with velocity $v$, performing free flight in the complement of the scatterers and scattering elastically on them.

\smallskip
\noindent
{\bf Notes:} 
(1)
In order to define the Lorentz trajectory we have to disregard those points of the rescaled PPP $\varpi_\varepsilon$ within distance $r$ from the origin. However, this will not effect whatsoever our arguments and conclusions since, with probability 1, for $\varepsilon$ sufficiently small there are no points like this.  
\\
(2) 
Given $\varepsilon$ and the initial velocity $v$, the trajectory $t\mapsto X_\varepsilon(t)$ is almost surely well defined for $t\in[0,\infty)$. That is: almost surely all scatterings will happen on a unique scatterer, the singular sets at the intersection of more than one scatterers will be a.s. avoided. 

In order to properly (and, comparably) formulate our invariance principles first we recall the relevant function spaces. Let 
\begin{align*}
\cC
&
:=
\cC([0,\infty),\R^d)
:=
\{\mathfrak{z}:[0,\infty)\to\R^d:
\mathfrak{z} \text{ continuous}, \, \mathfrak{z}(0)=0 \},
\end{align*}
endowed with the topology of uniform convergence on compact subintervals of $[0,\infty)$, which is metrizable and makes $\cC$ complete and separable. For details see, e.g., \cite{whitt-1970}. Further on, let   
\begin{align*}
\cC(\cC)
&
:=
\cC(\cC([0,1],\R^d),\R)
:=
\{F:\cC\to\R:
F \text{ continuous, } 
\norm{F}_\infty:=\sup_{\mathfrak{z}\in\cC}\abs{F(\mathfrak{z})}<\infty\},
\\
\cC_0(\cC)
&
:=
\cC_0(\cC([0,1],\R^d),\R)
:=
\{F\in\cC(\cC):
\forall \delta>0, 
\exists K\Subset\cC: 
\sup_{\mathfrak{z}\in\cC\setminus K}\abs{F(\mathfrak{z})}<\delta\}.    
\end{align*}
$(\cC_0(\cC), \norm{\cdot}_{\infty})$ is a separable Banach space. 
We will also denote by $t\mapsto W(t)$ a standard Brownian motion in $\R^d$, and recall from \cite{billingsley-1968}, \cite{stone-1963}, \cite{whitt-1970} criteria for weak convergence of probability measures on  $\cC$. 

In \cite{lutsko-toth-2020} the following \emph{annealed} invariance principle was proved.

\begin{theorem}
\label{thm:aip}
{\rm (\cite{lutsko-toth-2020} Theorem 1)}
Let $d=3$, $\varepsilon\to0$, $r_\varepsilon=\varepsilon^{d/(d-1)}$ and 
$T_\vareps\to\infty$ be such that 
\begin{align}
\label{aip-cond-strong}
\lim_{\varepsilon\to0} r_{\varepsilon} T_{\varepsilon}=0. 
\end{align} 
Let $t\mapsto X_\varepsilon(t)$ be the sequence of Lorentz trajectories among the spherical scatterers of radius $r_\varepsilon$ centred at the points $\varpi_\vareps$ cf. \eqref{varpieps}, and with deterministic initial velocities $v_\varepsilon\in S^{d-1}$.
For any $F\in\cC_0(\cC)$, 
\begin{align}
\label{aip}
\lim_{\varepsilon\to0}
\abs{\expect{F(T_{\varepsilon}^{-1/2} X_{\varepsilon}(T_{\varepsilon} \cdot))}
-
\expect{F(W(\cdot))}}
=0.
\end{align}
\end{theorem}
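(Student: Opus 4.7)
The plan is to prove Theorem \ref{thm:aip} by comparing the Lorentz trajectory $X_\varepsilon$ to a Markovian surrogate $Y_\varepsilon$ for which the functional CLT is straightforward, and then establishing a coupling under which the two processes stay identical on $[0,T_\varepsilon]$ with probability tending to one. The natural surrogate is the linear Boltzmann (``Markovian flight'') process: its velocity $V(t)\in S^{d-1}$ is a continuous-time Markov chain that holds each value for an $\mathrm{Exp}(\lambda)$ time, where $\lambda=\sigma_d\varrho_\varepsilon r_\varepsilon^{d-1}=O(1)$ under the Boltzmann-Grad scaling $r_\varepsilon^{d-1}\cdot\varepsilon^{-d}=1$, and at each jump selects a new direction from the hard-sphere scattering kernel; its position is $Y_\varepsilon(t):=\int_0^t V(s)\,ds$.

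The first step is an invariance principle for $Y_\varepsilon$. Since $V$ is geometrically ergodic on the compact manifold $S^{d-1}$ with uniform invariant law and vanishing mean, the martingale-approximation technique of Kipnis-Varadhan gives
\begin{equation*}
T_\varepsilon^{-1/2} Y_\varepsilon(T_\varepsilon\cdot)\Rightarrow W(\cdot) \qquad\text{in } \cC,
\end{equation*}
with the covariance of $W$ identified by a Green-Kubo computation and matching that in \eqref{aip}.

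The second step is the coupling construction. I would build $X_\varepsilon$ and $Y_\varepsilon$ jointly, collision by collision, using a common sequence of pairs $(\tau_k,\omega_k)$ with $\tau_k\sim\mathrm{Exp}(\lambda)$ and $\omega_k$ a scattering angle: $Y_\varepsilon$ uses them directly, while $X_\varepsilon$ reveals a \emph{fresh} scatterer along the current free-flight segment at distance $\tau_k$ with impact parameter $\omega_k$, conditioning on the yet-unexplored region via the spatial Markov property of the PPP. The two processes coincide until the first \emph{bad event}, which is either a \emph{recollision} (the current Lorentz segment would hit a previously revealed scatterer before completing the virtual flight) or an \emph{overlap} (the newly-placed scatterer conflicts with an already revealed one).

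The hard part will be the geometric estimate showing that the bad-event probability on $[0,T_\varepsilon]$ tends to zero. After $k\lesssim N_\varepsilon\asymp T_\varepsilon$ collisions, $O(k)$ scatterers of radius $r_\varepsilon$ have been revealed; using Mecke's formula for the conditional Poisson intensity on the unexplored region, and the mixing of velocities on $S^{d-1}$ after $O(1)$ scatterings, the recollision probability per step is bounded by a sum $\sum_j r_\varepsilon^{d-1}/L_j^{d-1}$, with $L_j$ the distance from the current position to the $j$-th previously visited scatterer. Summing over $k\leq N_\varepsilon$ and splitting according to the age of $L_j$ (with the diffusive estimate $L_j\sim\sqrt{k-j}$ for not-too-recent scatterers) yields a total bad-event probability vanishing under the hypothesis \eqref{aip-cond-strong}. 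The conclusion of Theorem \ref{thm:aip} then follows from the Markovian invariance principle by Slutsky's theorem, since on the good event the two processes coincide identically on $[0,T_\varepsilon]$.
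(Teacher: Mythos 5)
Your proposal follows essentially the same route as the actual proof in \cite{lutsko-toth-2020} (which Theorem \ref{thm:aip} merely quotes, and whose strategy this note summarizes): couple the Lorentz trajectory with a Markovian flight process by exploring the Poisson environment along the trajectory, identify the bad events as recollisions with already revealed scatterers and mechanically inconsistent (shadowed) scatterings, and show via Green-function-type estimates that their probability vanishes when $r_\varepsilon T_\varepsilon\to0$, so the invariance principle transfers from the Markovian process. The only cosmetic difference is in the first step: in $d=3$ the hard-sphere cross section is uniform, so the flight process has i.i.d.\ exponential flights with i.i.d.\ uniform velocities and Donsker's theorem suffices, with no need for the Kipnis--Varadhan machinery you invoke.
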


\noindent
{\bf Remarks:}
{\bf (R1)} (On dimension.)
Although some crucial elements of the proofs in \cite{lutsko-toth-2020}, on which the present note is based, are worked out in full detail in dimension $d=3$ only, we prefer to use the generic notation $d$ for dimension with the explicit warning that in the actual results and proofs $d=3$ is meant. See Remark (R7) below and the paragraph "remarks on dimension" in section 1 of \cite{lutsko-toth-2020} for comments on possible extensions to dimensions other than $d=3$.
\\[5pt]
{\bf (R2)}
Theorem \ref{thm:aip} is an \emph{annealed} invariance principle in the sense that on the left-hand side of \eqref{aip} the probability distribution of the rescaled Lorentz trajectory is provided by the random environment $\varpi$. The proofs in \cite{lutsko-toth-2020} rely on a genuinely annealed argument: a simultaneous realization of the PPP $\varpi$ and the trajectory $t\mapsto X_\varepsilon(t)$. 
\\[5pt]
{\bf (R3)}
The main result in \cite{lutsko-toth-2020} (Theorem 2 of that paper) is actually stronger, assuming 
\begin{align*}
\lim_{\varepsilon\to0} (r_{\varepsilon} \abs{\log \varepsilon})^{2}T_{\varepsilon}=0
\end{align*} 
rather than \eqref{aip-cond-strong}. However, the \emph{semi-quenched} invariance principle of this note, Theorem \ref{thm:ipip} below, is directly comparable to this weaker version. 

The main new result presented in this note is the following.

\begin{theorem}
\label{thm:ipip}
Let d=3, $\varepsilon\to0$, $r_\varepsilon=\varepsilon^{d/(d-1)}$, $T_\varepsilon\to\infty$ and $\beta_\varepsilon\in(0,1]$
be such that 
\begin{align}
\label{ipip-cond}
\lim_{\varepsilon\to0}
r_\varepsilon (T_\varepsilon + \beta_\varepsilon^{-1}) = 0, 
\end{align}
and define the solid angle domains
\begin{align*}
B_\varepsilon:=\{u\in S^{d-1}: 
2\arcsin\sqrt{(1-u\cdot e)/2} \leq \beta_\varepsilon\}, 
\qquad
e\in S^{d-1} \text{ \ deterministic}.
\end{align*}
Let $t\mapsto X_\varepsilon(t)$ be the sequence of Lorentz trajectories among the spherical scatterers of radius $r_\varepsilon$ centred at the points $\varpi_\vareps$ cf. \eqref{varpieps}, and with initial velocities $v_\vareps\sim{\tt UNI(}B_\vareps\tt{)}$ sampled independently of the PPP $\varpi$. 
For any $F\in\cC_0(\cC)$, 
\begin{align*}
\lim_{\varepsilon\to0}
\expect{ \abs{\qexpect{F(T_{\varepsilon}^{-1/2} X_{\varepsilon}(T_{\varepsilon} \cdot))}
-
\expect{F(W(\cdot))}}}
=0.
\end{align*}
\end{theorem}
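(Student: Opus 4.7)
The plan is to reduce the quenched statement to a variance estimate on the quenched expectation, and then to control that variance through a synchronous coupling of two Lorentz trajectories in a common environment.

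\textbf{Step 1 (variance reduction).} Writing $\wt X_\varepsilon(\cdot) := T_\varepsilon^{-1/2} X_\varepsilon(T_\varepsilon \cdot)$, I would start from the elementary decomposition
\begin{align*}
\expect{\abs{\qexpect{F(\wt X_\varepsilon)} - \expect{F(W)}}}
\leq
\abs{\expect{F(\wt X_\varepsilon)} - \expect{F(W)}}
+
\expect{\abs{\qexpect{F(\wt X_\varepsilon)} - \expect{F(\wt X_\varepsilon)}}}.
\end{align*}
The first term vanishes by Theorem \ref{thm:aip} (using rotational invariance of $\varpi$ plus Fubini to absorb the averaging of the initial velocity over $B_\varepsilon$, the hypothesis $r_\varepsilon T_\varepsilon \to 0$ being supplied by \eqref{ipip-cond}). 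For the second, Cauchy--Schwarz and a routine calculation using two conditionally i.i.d.\ velocities $v^{(1)}_\varepsilon, v^{(2)}_\varepsilon \sim {\tt UNI(}B_\varepsilon{\tt )}$ give
\begin{align*}
\expect{\abs{\qexpect{F(\wt X_\varepsilon)} - \expect{F(\wt X_\varepsilon)}}}^2
\leq
\expect{F(\wt X^{(1)}_\varepsilon) F(\wt X^{(2)}_\varepsilon)} - \big(\expect{F(\wt X_\varepsilon)}\big)^2,
\end{align*}
where $\wt X^{(1)}_\varepsilon, \wt X^{(2)}_\varepsilon$ are the diffusive rescalings of two trajectories $X^{(1)}_\varepsilon, X^{(2)}_\varepsilon$ launched from the origin into the \emph{same} scatterer configuration $\varpi_\varepsilon$ with those two independent initial velocities. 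Since the subtracted term already converges to $(\expect{F(W)})^2$, the whole problem reduces to the two-trajectory annealed CLT
\begin{align*}
\lim_{\varepsilon\to0} \expect{F(\wt X^{(1)}_\varepsilon) F(\wt X^{(2)}_\varepsilon)} = \expect{F(W)}^2.
\end{align*}

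\textbf{Step 2 (asymptotic independence via synchronous coupling).} I would couple $X^{(1)}_\varepsilon$ and $X^{(2)}_\varepsilon$ in the shared environment $\varpi_\varepsilon$ and argue that, after a short initial segment, the two trajectories explore essentially disjoint parts of the PPP. Since both $v^{(i)}_\varepsilon$ lie in a cone of half-angle $\beta_\varepsilon$, the typical initial angular gap is of order $\beta_\varepsilon$, so the two trajectories reach spatial separation exceeding $2 r_\varepsilon$ within real time $\cO(r_\varepsilon/\beta_\varepsilon)$; during this rectilinear phase the expected number of collisions is also $\cO(r_\varepsilon/\beta_\varepsilon) \to 0$ by \eqref{ipip-cond}. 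Hence with probability $1-\ordo(1)$ no collision occurs before separation, and the trajectories emerge from this phase confined to two $r_\varepsilon$-tubes emanating from almost the same point but along slightly different rays. On the event that these tubes remain globally disjoint on $[\tau_\varepsilon, T_\varepsilon]$ with $\tau_\varepsilon/T_\varepsilon = \cO(r_\varepsilon/(\beta_\varepsilon T_\varepsilon)) \to 0$, independence of PPP on disjoint sets makes the two trajectories conditionally independent, each a bona fide Lorentz trajectory in an independent thinning of $\varpi$. Theorem \ref{thm:aip} applied separately to each, together with continuity and boundedness of $F$ (to absorb the vanishingly short initial segment and the event of tube overlap), then yields the desired product limit $\expect{F(W)}^2$.

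\textbf{Main obstacle.} The delicate part will be the \emph{no-mutual-recollision} estimate: showing that, outside an event of probability $\ordo(1)$, the two $r_\varepsilon$-tubes remain spatially disjoint throughout the whole diffusive window, i.e.\ neither trajectory revisits a scatterer that the other has previously used. Heuristically, a mutual recollision is no worse than a self-recollision of the combined doubled trajectory, so I expect the proof to transplant the one-trajectory non-recollision bounds underlying the coupling with the idealized Markovian flight process in \cite{lutsko-toth-2020} to the present paired setting, via a union bound over same-trajectory and cross-trajectory encounters. This is the place where the Boltzmann--Grad condition $r_\varepsilon T_\varepsilon \to 0$ and the dimensional restriction $d=3$ from \cite{lutsko-toth-2020} enter decisively.
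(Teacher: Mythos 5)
Your proposal is correct in substance but follows a genuinely different route from the paper. You prove the in-probability statement directly by a second-moment argument: split off the annealed bias (Theorem \ref{thm:aip}, averaged over $v_\varepsilon\sim{\tt UNI}(B_\varepsilon)$ by Fubini/dominated convergence, which is legitimate since the annealed theorem holds for every deterministic velocity sequence), and bound the fluctuation of $\qexpect{F}$ by the covariance of two trajectories launched into the \emph{same} environment with independent initial velocities; asymptotic factorization then follows once the two trajectories are shown not to interfere before $T_\varepsilon$. The paper instead proves the stronger Theorem \ref{thm:sqip} --- almost sure convergence along explicitly quantified sequences --- by coupling $N_n\to\infty$ trajectories with i.i.d.\ Markovian flights, applying a Hoeffding/Borel--Cantelli triangular SLLN and Donsker, and then deduces Theorem \ref{thm:ipip} by subsequence extraction. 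Your approach is leaner and avoids the summability condition \eqref{sqip-cond} and the subsequence step, but it cannot deliver the almost-sure statement, which is the paper's main new content. Note also that the decisive quantitative input is the same in both: the $N=2$ case of Proposition \ref{prop:early-stopping}, $\probab{\sigma<T}\leq Cr(T+w^{-1})$, combined with $\probab{w<\alpha}\leq C(\alpha/\beta_\varepsilon)^{d-1}$ and the intermediate scale $\alpha_\varepsilon=r_\varepsilon^{1/d}\beta_\varepsilon^{(d-1)/d}$, which is exactly where $r_\varepsilon\beta_\varepsilon^{-1}\to0$ from \eqref{ipip-cond} is used; the cross-trajectory Green-function bound \eqref{greenbound2} is \emph{not} directly quotable from \cite{lutsko-toth-2020} (the deterministic first flight directions at angle $\geq w$ must be handled separately) and is proved in the paper's Appendix.

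One step of your Step 2 should be repaired as stated: ``on the event that the tubes remain disjoint, independence of the PPP on disjoint sets makes the two trajectories conditionally independent, and Theorem \ref{thm:aip} applied separately to each factor gives the product limit.'' The disjointness event is defined through \emph{both} trajectories, so conditioning on it does not preserve independence, and restricting the annealed invariance principle to that event is circular. The clean implementation is the one you yourself point to in your final paragraph: run the construction of Section \ref{ss:Quenched coupling with independent Markovian flight processes} with $N=2$, i.e.\ couple the pair $(X^{(1)}_\varepsilon,X^{(2)}_\varepsilon)$ to a pair of genuinely independent Markovian flight processes $(Y^{(1)}_\varepsilon,Y^{(2)}_\varepsilon)$, bound $\probab{\sigma<T_\varepsilon}=\ordo(1)$ via Proposition \ref{prop:early-stopping} as above, and then factorize the expectation for the independent flights and apply Donsker's theorem (Proposition \ref{prop:Donsker}) to each factor; boundedness of $F$ absorbs the exceptional event. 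With that substitution your argument closes and yields Theorem \ref{thm:ipip}.
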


\noindent{\bf Remarks ctd.:}
{\bf (R4)}
Theorem \ref{thm:ipip} is an invariance principle valid \emph{in probability} with respect to the random environment $\varpi$. An equivalent formulation is that under the stated conditions,  
for any $\delta>0$
\begin{align*}
\lim_{\vareps\to0}
\probab{\{\varpi: 
D_{\mathrm{LP}}
\big( \text{\tt law-of}(T_{\varepsilon}^{-1/2} X_{\varepsilon}( T_{\varepsilon} \cdot )\,|\, \cF_{\mathrm{PPP}}), \text{\tt law-of}( W( \cdot ) ) \big)
>\delta\}}
=
0,
\end{align*}
where $D_{\mathrm{LP}}(\cdot, \cdot)$ denotes the L\'evy-Prohorov distance between probability measures on $\cC$.

\bigskip

We will actually prove a stronger statement from which Theorem \ref{thm:ipip} follows as a corollary:
In the setting of Theorem \ref{thm:ipip}, for almost all realizations of the PPP $\varpi$, along (precisely quantified) sufficiently fast converging subsequences $\vareps_n\to0$, the invariance principle holds:

\begin{theorem}
\label{thm:sqip}
Let $d=3$, $\varepsilon_n\to0$, $r_n:=\varepsilon_n^{d/(d-1)}$, $T_n\to\infty$ and $\beta_n\in(0,1]$   be such that
\begin{align}
\label{sqip-cond}
\sum_{n}
\big(
\log n\, 
r_n T_n 
+
(\log n)^{2} 
\left(r_n \beta_n^{-1}\right)^{(d-1)/d} 
\big)
<\infty,
\end{align}
and define the solid angle domains
\begin{align}
\label{solid-angle}
B_n:=\{u\in S^{d-1}: 
2\arcsin\sqrt{(1-u\cdot e)/2} \leq \beta_n\}, 
\qquad
e\in S^{d-1} \text{\ deterministic}.
\end{align}
Let $t\mapsto X_n(t)$ be the sequence of Lorentz trajectories among the spherical scatterers of radius $r_n$ centred at the points $\varpi_n:=\varpi_{\varepsilon_n}$ cf. \eqref{varpieps}, and with initial velocities $v_n\sim{\tt UNI(}B_n\tt{)}$ sampled independently of the PPP $\varpi$.
For almost all realizations of the PPP $\varpi$, for any $F\in\cC_0(\cC)$,
\begin{align*}
\lim_{n\to\infty}
\abs{\qexpect{F(T_{n}^{-1/2} X_{n}(T_{n} \cdot))}
-
\expect{F(W(\cdot))}}
=0.
\end{align*}
\end{theorem}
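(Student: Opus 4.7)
My plan is to establish Theorem~\ref{thm:sqip} via a variance bound on the quenched expectation combined with Borel--Cantelli. Because $(\cC_0(\cC),\norm{\cdot}_\infty)$ is separable I fix a countable dense family $\{F_k\}_{k\geq 1}\subset\cC_0(\cC)$; it is enough to prove, for each $k$ on a single full-measure $\varpi$-event,
\[
Y_n^{(k)}:=\qexpect{F_k(T_n^{-1/2} X_n(T_n\cdot))}\longrightarrow \expect{F_k(W(\cdot))}.
\]
Uniform $\norm{\cdot}_\infty$-approximation and a countable union of null sets then extend the conclusion to every $F\in\cC_0(\cC)$ on a common full-measure event. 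The annealed principle (Theorem~\ref{thm:aip}), applied at any deterministic $v\in S^{d-1}$ and then integrated over $v\in B_n$ by rotation invariance of the PPP and isotropy of $W$, already gives $\expect{Y_n^{(k)}}\to \expect{F_k(W)}$. Only the a.s.\ statement $Y_n^{(k)}-\expect{Y_n^{(k)}}\to 0$ remains.

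To control the variance I use the two-particle trick on a common environment. Let $v_n^{(1)},v_n^{(2)}\sim\mathrm{UNI}(B_n)$ be mutually independent and independent of $\varpi$, and let $X_n^{(1)},X_n^{(2)}$ be the associated Lorentz trajectories on the common environment $\varpi_n$. Conditional on $\varpi$ they are independent, so
\[
\var{Y_n^{(k)}}=\expect{F_k(T_n^{-1/2}X_n^{(1)}(T_n\cdot))\,F_k(T_n^{-1/2}X_n^{(2)}(T_n\cdot))}-\big(\expect{F_k(T_n^{-1/2}X_n(T_n\cdot))}\big)^2.
\]
The heart of the argument is the synchronous coupling announced in the abstract: on an enlarged probability space I introduce an independent copy $\varpi'$ of the PPP and build a Lorentz trajectory $\wt X_n^{(2)}$ on $\varpi'$ with initial velocity $v_n^{(2)}$, coupled to $X_n^{(2)}$ to coincide with it as long as $X_n^{(2)}$ scatters only on points of $\varpi_n$ that are fresh for $X_n^{(1)}$ as well as for $\wt X_n^{(2)}$ itself. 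Because $X_n^{(1)}$ and $\wt X_n^{(2)}$ then live on independent PPPs they are independent, Theorem~\ref{thm:aip} applies factorwise to give an exact factorization, and consequently
\[
\abs{\var{Y_n^{(k)}}}\leq 2\norm{F_k}_\infty^2 \,\probab{X_n^{(2)}\neq \wt X_n^{(2)}\text{ on }[0,T_n]}\;=:\;2\norm{F_k}_\infty^2\,\eta_n.
\]

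Everything now hinges on the quantitative coupling estimate
\[
\eta_n\;\lesssim\; \log n\cdot r_n T_n\;+\;(\log n)^{2}\,(r_n\beta_n^{-1})^{(d-1)/d},
\]
which matches the summand of~\eqref{sqip-cond} exactly. The first term is the standard Boltzmann--Grad ``tube freshness'' error from \cite{lutsko-toth-2020} -- the probability that $X_n^{(2)}$ recollides with a scatterer it itself has already visited -- required uniformly over an $O(\log n)$-step discretization of $[0,T_n]$. The second, genuinely new, term bounds the probability that $X_n^{(2)}$ ever meets a scatterer already touched by $X_n^{(1)}$. Here the hypothesis $v_n^{(j)}\in B_n$ is crucial: the angular gap between the two initial velocities is typically of order $\beta_n$, so the two exploration tubes (each of radius $r_n$) overlap only in an initial segment of length $O(r_n\beta_n^{-1})$, and within this short segment the joint Boltzmann--Grad occupation estimates of \cite{lutsko-toth-2020}, upgraded to the pair process on the common PPP, yield the stated power $(r_n\beta_n^{-1})^{(d-1)/d}$, with the additional $\log n$ factor coming from joint time-discretization on both tubes. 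I expect the main obstacle to be precisely this pair-process shared-scatterer bound: one must simultaneously track both explorations on the same $\varpi$, separate the ``overlap'' regime near the origin from the ``disjoint'' regime after time $\sim r_n\beta_n^{-1}$, handle the rare post-separation event where $X_n^{(2)}$ accidentally backscatters into $X_n^{(1)}$'s already-explored tube, and keep all constants uniform enough to survive the Borel--Cantelli. Once $\sum_n\eta_n<\infty$ is in place via~\eqref{sqip-cond}, Chebyshev--Borel--Cantelli yields $Y_n^{(k)}\to \expect{F_k(W)}$ almost surely for each $k$, and density of $\{F_k\}$ in $(\cC_0(\cC),\norm{\cdot}_\infty)$ upgrades this to every $F\in\cC_0(\cC)$ on a common full-measure event, completing the proof.
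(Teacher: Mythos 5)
Your proposal is correct in outline but takes a genuinely different route from the paper. The paper works with a triangular array of $N_n$ synchronous Lorentz trajectories on the common environment, with $N_n/\log n\to\infty$, couples each row to $N_n$ i.i.d.\ Markovian flight processes, and combines a Hoeffding/Borel--Cantelli strong law for the empirical averages (Lemma \ref{lem:hoeffding-borel-cantelli}, Proposition \ref{prop:as-limits}) with Donsker's theorem and the decoupling estimate of Proposition \ref{prop:X=Y}. You instead take only two trajectories, bound the variance of the quenched expectation by the probability that the pair coupling fails, and conclude by Chebyshev and Borel--Cantelli. Both arguments rest on exactly the same technical core: the bound \eqref{early-stopping} of Proposition \ref{prop:early-stopping} together with anti-concentration of the angle $w$ between two independent $\mathrm{UNI}(B_n)$ velocities. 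For $N=2$ this yields, after averaging over the velocities, $\eta_n\leq C\big(r_nT_n+r_n\beta_n^{-1}\big)$ in $d=3$, which is summable under \eqref{sqip-cond}; in fact your method needs no logarithmic factors at all, so it would prove the theorem under a slightly weaker hypothesis than \eqref{sqip-cond} --- that is what the second-moment method buys, at the price of not producing the empirical-measure statements of Proposition \ref{prop:as-limits}. Two caveats. First, your heuristic for the $\log n$ and $(\log n)^2$ factors (time discretization of the tubes) is not where they come from in the paper: they arise because Hoeffding requires $N_n\gg\log n$ trajectories per row while the error \eqref{early-stopping} grows like $N_nr_nT_n+N_n^2 r_n w^{-1}$; likewise the exponent $(d-1)/d$ comes from optimizing the threshold $\alpha_n$ in the union bound over all $\binom{N_n}{2}$ pairs, not from an occupation estimate on the overlap segment. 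Second, constructing a Lorentz process $\wt X_n^{(2)}$ on an independent environment $\varpi'$ that simultaneously has the correct law, is exactly independent of $X_n^{(1)}$, and coincides with $X_n^{(2)}$ with high probability is delicate; it is cleaner (and sufficient) to couple the pair $(X_n^{(1)},X_n^{(2)})$ to the i.i.d.\ Markovian flights $(Y_n^{(1)},Y_n^{(2)})$ as in Section \ref{ss:Quenched coupling with independent Markovian flight processes}, use the exact factorization $\expect{F(Y_n^{(1)})F(Y_n^{(2)})}=\expect{F(Y_n^{(1)})}\,\expect{F(Y_n^{(2)})}$, and identify the limit of each factor by Donsker's theorem rather than by Theorem \ref{thm:aip}. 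With that modification your argument closes.
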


\noindent
{\bf Remarks ctd.:}
{\bf (R5)}
Theorem \ref{thm:ipip} is a corollary of Theorem \ref{thm:sqip}, as under condition \eqref{ipip-cond} from any sequence $\varepsilon_n\to0$ a subsequence $\varepsilon_{n^\prime}$ can be extracted that satisfies condition \eqref{sqip-cond}. 
On the other hand, Theorem \ref{thm:sqip} is genuinely stronger than Theorem \ref{thm:ipip}, as the former provides an explicit quantitative characterization of the sequences $\varepsilon_n\to0$ along which the quenched (i.e., almost sure) invariance principle holds. 
\\[5pt]
{\bf (R6)}
For a comprehensive historical survey of the invariance principle for the random Lorentz gas we refer to the monograph \cite{spohn-1991} and to section 1 on \cite{lutsko-toth-2020}. We just mention here that the main milestones preceding \cite{lutsko-toth-2020} are \cite{gallavotti-1970, gallavotti-1999}, \cite{spohn-1978}, and \cite{boldrighini-bunimovich-sinai-1983}.  The new result of this note (i.e., Theorems \ref{thm:ipip} and \ref{thm:sqip}) is to be compared with that in \cite{boldrighini-bunimovich-sinai-1983} where a fully quenched invariance principle is proved for the 2-dimensional random Lorentz gas in the Boltzmann-Grad limit, on kinetic time scales. The weakness of our result (compared with \cite{boldrighini-bunimovich-sinai-1983}) is that the limit theorem is semi-quenched, in the sense that a.s. invariance principle is proved along \emph{sufficiently fast} converging sequences $\varepsilon_n$ only. On the other hand the strengths are twofold.
($\star$)
The proof works in dimension $d=3$ and it is "hands-on", not relying on the heavy computational details of \cite{boldrighini-bunimovich-sinai-1983} (performable only in $d=2$). See remark (R7) below for possible extensions to dimensions other than $d=3$.
($\star\star$)
The time-scale of validity is much longer, $T_\varepsilon=\ordo(\varepsilon^{-d/(d-1)})$ rather than $T_\varepsilon = \Ordo(1)$, as in \cite{boldrighini-bunimovich-sinai-1983}. 
\\[5pt]
{\bf (R7)}
The results of \cite{lutsko-toth-2020} are stated and the proofs are fully spelled out for dimension $d=3$. Therefore the new results of this note (which rely on those of \cite{lutsko-toth-2020}) are also valid in $d=3$ only. However, as noted in  the paragraph "remarks on dimension" in \cite{lutsko-toth-2020}, extension to other dimensions is possible, on the expense of more involved details due partly to recurrence (in $d=2$) and partly to the non-uniform scattering cross section (in all dimensions other than $d=3$). For arguments in $d=2$ see \cite{lutsko-toth-2021}, \cite{lutsko-toth-2024}.

\bigskip\noindent
{\bf The strategy of proof} in \cite{lutsko-toth-2020} (also extended to \cite{lutsko-toth-2021}, \cite{lutsko-toth-2024}) was based on a \emph{coupling} of the mechanical/Hamiltonian Lorentz trajectory within the environment consisting of randomly placed scatterers and the Markovian random flight trajectory. The coupling is realized as an \emph{exploration} of the random environment along the trajectory of the tagged particle. This construction is {\it par excellence\,} annealed, as the environment and the trajectory of the moving particle are constructed synchronously (rather than first sampling the environment and consequently letting the particle move in the fully sampled environment). However, this exploration process can be realized synchronously with multiple (actually, many) moving particles, which, as long as they explore disjoint areas of the environment, are independent in the annealed sense (due to the Poisson character of the environment). Applying a Strong Law of Large Numbers to tests of these trajectories will provide the quenched invariance principle - valid for typical realizations of the environment. A somewhat similar exploration strategy is used in the very different context of random walks on sparse random graphs, \cite{bordenave-caputo-salez-2018}. 

\subsection*{Acknowledgements.}
I thank Pietro Caputo and Justin Salez for drawing my attention to their paper \cite{bordenave-caputo-salez-2018}. I also thank two anonymous reviewers for their thoughtful comments  on presentation and advice for improvements. 
This work was supported by the grant no. K-143468 of the Hungarian National Research, Development and Innovation Office (NKFIH).

\section{Construction and Quenched Coupling}
\label{s:Construction and Quenched Coupling}

\subsection{Prologue to the coupling}
\label{ss: Prologue to the coupling}

The proof of Theorem \ref{thm:sqip} is based on a coupling (that is: joint realization on the same enlarged probability space $(\Omega, \cF, \boP)$) of 
\begin{align}
\label{coupled}    
\big((\varpi, (X_j(t): 1\leq j\leq N, 0\leq t\leq T)), \big((Y_j(t): 1\leq j\leq N, 0\leq t\leq T)\big),
\end{align}
where 
\begin{enumerate}[$\circ$, nosep]

\item 
$\varpi$ is the PPP of intensity $\varrho$ in $\{x\in \R^d: \abs{x}>r\}$ serving as the centres of fixed (immovable) spherical scatterers of radii $r$, and  $(X_j(t): 1\leq j\leq N, 0\leq t\leq T)$ are Newtonian Lorentz trajectories starting from $X_j(0)=0$ with prescribed initial velocities $\dot X_j(0)=v_j$, and moving among the same randomly placed scatterers. Note, that the trajectories $(X_j(t): 1\leq j\leq N, 0\leq t\leq T)$ are fully determined by the PPP $\varpi$ and their initial velocities.

\item 
$(Y_j(t): 1\leq j\leq N, 0\leq t\leq T)$ are i.i.d. Markovian random flight processes (see section \ref{ss:Quenched coupling with independent Markovian flight processes}) with the same initial data, $Y_j(0)=0$, $\dot Y_j(0)=v_j$. 

\end{enumerate}

\noindent
The coupling is realized so that with high probability the two collections of processes stay identical for sufficiently long time $T$. Thus from limit theorems valid for the Markovian processes (which follow from well established probabilistic arguments) we can conclude limit theorems for the mechanical/Newtonian trajectories.  

\medskip
\noindent
The coupling can be constructed in two different but mathematically equivalent ways: 

\begin{enumerate} [(a)]

\item 
Start with the i.i.d. Markovian trajectories $(Y_j(t): 1\leq j\leq N, 0\leq t\leq T)$ and (conditionally on) given these construct \emph{jointly} the environment $\varpi$ and the Newtonian trajectories $(X_j(t): 1\leq j\leq N, 0\leq t\leq T)$ exploring it \emph{en route}. The details of this narrative are explicitly spelled out, for $N=1$, in \cite{lutsko-toth-2020}. Extension of the construction for $N>1$ is essentially straightforward. 

\item 
Start with the PPP $\varpi$ and the Lorentz processes $(X_j(t): 1\leq j\leq N, 0\leq t\leq T)$  moving in this joint random environment $\varpi$. Then, (conditionally on) given these construct the i.i.d. Markovian flight processes $(Y_j(t): 1\leq j\leq N, 0\leq t\leq T)$, by disregarding recollisions (with already seen scatterers) and compensating for the (Markovian) scattering events shadowed by the $r$-tubes in $\R^d$ swept by the past trajectories. For full details of this narrative see   section \ref{ss:Quenched coupling with independent Markovian flight processes} below.

\end{enumerate}

\noindent
Construction  (a) is somewhat easier to narrate and perceive (done in \cite{lutsko-toth-2020}). Its drawback is that this construction is par-excellence annealed. The environment $\varpi$ is explored and constructed on the way, jointly with the trajectories $(X_j(t): 1\leq j\leq N, 0\leq t\leq T)$ and therefore conditioning on the environment, as requested in a quenched approach, is not possible (or, at least not transparent). Construction (b) of the present note starts with the environment $\varpi$ given and therefore is suitable for the quenched arguments. Its drawback may be that the i.i.d. Markovian flight processes $(Y_j(t): 1\leq j\leq N, 0\leq t\leq T)$ are constructed in a less intuitive way (see section \ref{ss:Quenched coupling with independent Markovian flight processes} below). We emphasize, however, that both constructions provide the same joint distributions of the processes in  \eqref{coupled}.

Since in all considered cases $r T\to0$ in the limit, see \eqref{aip-cond-strong}, \eqref{ipip-cond}, \eqref{sqip-cond}, without any loss of generality, throughout this paper we will assume 
\begin{align}
\label{rT<1}
rT\leq 1. 
\end{align}

\subsection{Synchronous Lorentz trajectories}
\label{ss:Synchronous Lorentz trajectories}

Beside $\varepsilon$ and $r=\varepsilon^{d/(d-1)}$ let $N\in\N$,  and 
\begin{align*}
v_j\in S^{d-1}, 
\qquad
1\leq j\leq N.
\end{align*}
Given these we define \emph{jointly} $N$ synchronous Lorentz trajectories 
\begin{align*}
t\mapsto X_j(t) \in\R^d, 
\qquad
1\leq j\leq N, 
\end{align*}
among fixed spherical scatterers of radius $r$ centred at the points of the rescaled PPP $\varpi_\varepsilon$ cf. \eqref{varpieps}, 
with initial conditions 
\begin{align*}
& 
X_j(0)=0, 
&&
\dot X_j(0)=v_j, 
&&
1\leq j\leq N.
\end{align*}
(Given the parameters and the initial velocities, the trajectories $t\mapsto X_j(t)$, $1\leq j\leq N$, are almost surely well defined for $t\in[0,\infty)$.) 

We will consider the c\`adl\`ag version of the velocity processes 
\begin{align*}
V_j(t):=\dot X_j(t). 
\qquad
1\leq j\leq N,
\end{align*}
and use the notation $X:=\{X_j: 1\leq j\leq N\}$. 

In order to construct the \emph{quenched coupling} with Markovian flight processes (in the next subsection) we have to define some further variables in terms of the Lorentz processes $t\mapsto X(t)$. 
\\
First the \emph{collision times} $\tau_{j,k}$, $1\leq j\leq N$, $k\geq0$:
\begin{align*}
&
\tau_{j,0}
:=
0, 
&&
\tau_{j,k+1}
:=
\inf\{t>\tau_{j,k}: V_j(t)\not=V_j(\tau_{j,k})).
\end{align*}
In plain words: $\tau_{j,k}$ is the time of the $k$-th scattering of the Lorentz trajectory $X_j(\cdot)$. We will use the notation
\begin{align*}
X_{j,k}:=X_j(\tau_{j,k}), 
\qquad
V_{j,k+1}:=V_j(\tau_{j,k}).
\qquad
X^{\prime}_{j,k}:=
X_{j,k} + r \frac{V_{j,k}-V_{j,k+1}}{\abs{V_{j,k}-V_{j,k+1}}}
\end{align*}
That is: $X_{j,k}$ is the position of the Lorentz trajectory at the instant of its $k$-th collision, $V_{j,k+1}$ is its velocity right after this collision, and $X^{\prime}_{j,k}$ is the position of the centre of the fixed scatterer which had caused this collision. Altogether, the continuous-time trajectory is written
\begin{align*}
&
X_j(t)
=
X_{j,k} + (t-\tau_{j,k})V_{j,k+1}, 
&&
\text{ for }
\quad
t\in[\tau_{j,k},\tau_{j,k+1}).
\end{align*}
Next, the \emph{indicators of freshness}:
\begin{align*}
&
a_{j,0}:=1, 
&&
a_{j,k}
:=
\begin{cases}
\displaystyle
1 
& 
\displaystyle
\text{ if } \quad \forall \delta>0: 
\min_{\substack{1\leq i \leq N \\ 0\leq s \leq \tau_{j,k}-\delta}}
\abs{X_{i}(s) - X^{\prime}_{j,k}}>r
\\
\displaystyle
0 
& 
\displaystyle
\text{ otherwise } 
\end{cases}
\ \ \ 
(k\geq1).
\end{align*}
In plain words, $a_{j,k}$ indicates whether the $j$-th trajectory at its $k$-th collision encounters a fresh scatterer, never seen in the past by any one of the $N$ Lorentz trajectories. 
\\
Finally, the \emph{shadow indicators} $b_j(t,v)$, $t\in[0,\infty)$, $v\in S^{d-1}$:
\begin{align*}
b_j(t,v)
:=
\begin{cases}
\displaystyle
0 
& 
\displaystyle
\text{ if } \quad \forall \delta>0: 
\min_{\substack{1\leq i \leq N \\ 0\leq s \leq t-\delta}}
\abs{X_{i}(s) - X_{j}(t) + r \frac{v-V_{j}(t)}{\abs{v-V_{j}(t)}}}>r, 
\\
\displaystyle
1 
& 
\displaystyle
\text{ otherwise } 
\end{cases}
\end{align*}
In plain words, $b_j(t,v)$ indicates whether at time $t$ a virtual scatterer (virtually) causing new velocity $v$ would be \emph{mechanically inconsistent} with the past of the paths.  

\subsection{Quenched coupling with independent Markovian flight processes}
\label{ss:Quenched coupling with independent Markovian flight processes}

On the same probability space $(\Omega, \cF, \boP)$ and jointly with the Lorentz trajectories $X$, we construct $N$ \emph{independent Markovian flight processes} 
\begin{align*}
t\mapsto Y_j(t) \in\R^d, 
\qquad
1\leq j\leq N, 
\end{align*}
with initial conditions identical to those of the Lorentz trajectories
\begin{align*}
&
Y_j(0)=0, 
&&
\dot Y_j(0)=v_j, 
&&
1\leq j\leq N. 
\end{align*}
The processes $\{Y_j(\cdot)$: $1\leq j\leq N\}$, are independent, and consist of i.i.d. ${\tt EXP(1)}$-distributed free flights with independent ${\tt UNI(}S^{d-1}{\tt)}$-distributed velocities. See \cite{lutsko-toth-2020} for a detailed exposition of the Markovian flight processes.  We will again consider the c\`adl\`ag version of their velocity processes 
\begin{align*}
U_j(t):=\dot Y_j(t),
\qquad
1\leq j\leq N.
\end{align*}
and use the notation $Y:=\{Y_j: 1\leq j\leq N\}$. 

The construction of the coupling goes as follows. 
Assume that the probability space $(\Omega, \cF, \boP)$, besides and independently of the PPP $\varpi$ supports the fully independent random variables 
\begin{align*}
&
\wt\xi_{j,k} \sim {\tt EXP(}1{\tt)}, 
&&
\wt U_{j,k+1} \sim {\tt UNI(}S^{d-1}{\tt)}, 
&&
j=1,\dots,N, 
\ \ \ 
k\geq 1, 
\end{align*}
and let
\begin{align*}
&
\wt\theta_{j,k}
:=
\sum_{l=1}^k \wt\xi_{j,l},
&&
b_{j,k}
:=
b_{j}(\wt\theta_{j,k}, \wt U_{j,k+1}). 
\end{align*}
We construct the piecewise constant c\`adl\`ag velocity processes $U_j(\cdot)$ successively on the time intervals $[\tau_{j,k}, \tau_{j,k+1})$, $k=0,1,\dots$:  

\begin{enumerate}[$\bullet$]

\item
At $\tau_{j,k}$: 
\begin{enumerate} [$\circ$]

\item
If $a_{j,k}=0$ then let $U_j(\tau_{j,k})=U_j(\tau_{j,k}^-)$.

\item
If $a_{j,k}=1$ then let $U_j(\tau_{j,k})=V_{j,k+1}$.

\end{enumerate}

\item
At any $\wt\theta_{j,l}\in (\tau_{j,k}, \tau_{j,k+1})$

\begin{enumerate} [$\circ$]

\item 
If $b_{j,l} = 0$ then let  $U_j(\theta_{j,l})=U_j(\theta_{j,l}^-)$. 

\item 
If $b_{j,l} = 1$ then let  $U_j(\theta_{j,l})=\wt U_{j,l+1}$. 

\end{enumerate}

\item 
In the open subintervals of $(\tau_{j,k}, \tau_{j,k+1})$ determined by the times $\{\wt\theta_{j,l}: l\geq1\} \cap (\tau_{j,k}, \tau_{j,k+1})$ keep the value of $U_j(t)$ constant.

\end{enumerate}

It is true - and not difficult to see - that the velocity processes $\{U_j(t): \ 1\leq j\leq N\}$ constructed in this way are independent between them, and distributed as required. That is, they consist of i.i.d. ${\tt EXP(}1{\tt)}$-distributed intervals where their values are i.i.d. $\tt{UNI(}S^{d-1}{\tt)}$.  This is due to the fact that each Lorentzian scatterer is taken into account exactly once, when first explored by a Lorentz particle, and  missing scatterings (due to  areas shadowed by the $\vareps$-neighbourhood of past trajectories) are compensated for by the auxiliary events at times $\wt\theta_{j,l}$. 

Consistently with the notation introduced for the Lorentz trajectories, we write
\begin{align*}
& 
\theta_{j,0}
:=
0, 
&&
\theta_{j,k+1}
:=
\inf\{t>\theta_{j,k}: U_j(t)\not=U_j(\theta_{j,k})),
\end{align*}
and 
\begin{align*}
Y_{j,k}:=Y_j(\theta_{j,k}), 
\qquad
U_{j,k+1}:=U_j(\theta_{j,k}),
\qquad
Y^{\prime}_{j,k}
:=
Y_{j,k} + r \frac{U_{j,k}-U_{j,k+1}}{\abs{U_{j,k}-U_{j,k+1}}}. 
\end{align*}
That is, 
$Y_{j,k}$ is the position of the Markovian flight trajectory at the instant of its $k$-th scattering, 
$U_{j,k+1}$ is its velocity right after this scattering, and $Y^{\prime}_{j,k}$ would be the position of the centre of a spherical scatterer of radius $r$ which could have caused this scattering.  Altogether, the continuous time Markovian flight trajectory is written as 
\begin{align*}
&
Y_j(t)
=
Y_{j,k} + (t-\theta_{j,k})U_{j,k+1}
&&
\text{ for }
\quad
t\in[\theta_{j,k},\theta_{j,k+1}).
\end{align*}
Note that 
\begin{align*}
\{\theta_{j,k}: k\geq 0\}
\subseteq
\{\tau_{j,k}: k\geq 0\}
\cup
\{\wt\theta_{j,k}: k\geq 0\}.
\end{align*}
This coupling between Lorentz trajectories and Markovian flight processes has the same joint distribution as the one presented in \cite{lutsko-toth-2020}. However, it is realized in a different way. While in  \cite{lutsko-toth-2020} first we constructed the Markovian flight processs $Y$ and conditionally on this we constructed the coupled Lorentz exploration process $X$, here we do this in reverse order: first we realize the $N$ Lorentz exploration processes $X=\{X_1,\dots,X_N\}$ and given these we realize the $N$ independent Markovian flight processes $Y=\{Y_1,\dots,Y_N\}$ coupled to them. 

\subsection{Control of tightness of the coupling}
\label{ss:Control of tightness of the coupling}

We quantify the tightness of the coupling. 

The relevant filtrations are 
\begin{align*}
\cF^{X\phantom{,Y}}_{t}
&
:=
\sigma(\{X_j(s): 1\leq j\leq N, \ 0\leq s\leq t\}),
\\
\cF^{Y\phantom{X,}}_{t}
&
:=
\sigma(\{Y_j(s): 1\leq j\leq N, \ 0\leq s\leq t\}),
\\
\cF^{X,Y}_{t}
&
:=
\cF^{X}_t\lor \cF^{Y}_t.
\end{align*}
Next we define some relevant stopping times, indicating explicitly the filtration with respect to which they are adapted 
\begin{align*}
\sigma_1
&
:=
\min\{\tau_{j,k}: a_{j,k}=0\}
&&
\text{stopping time w.r.t. }
\cF_t^{X},
\\
\sigma_2
&
:=
\min\{\theta_{j,l}: b_{j,l}=1\}
&&
\text{stopping time w.r.t. }
\cF_t^{X,Y},
\\
\sigma_3
&
:=
\inf\{t>0: \min\{\abs{Y_j(t)-Y^{\prime}_{i,k}}: \theta_{i,k}<t\}<r\}
&&
\text{stopping time w.r.t. }
\cF_t^{Y},
\\
\sigma_4
&
:=
\min\{\theta_{i,k}: \inf\{\abs{Y_j(s)-Y^{\prime}_{i,k}}: 0\leq s\leq \theta_{i,k}\}<r\}
&&
\text{stopping time w.r.t. }
\cF_t^{Y},
\\
\sigma_{\phantom{2}}
&
:=
\inf\{t: X(t)\not=Y(t)\}
=
\min\{\sigma_1,\sigma_2\}
&&
\text{stopping time w.r.t. }
\cF_t^{X,Y}
\end{align*}.
In plain words:
\begin{enumerate}[-]

\item
$\sigma_1$ is the first time an already explored scatterer is re-encountered by one of $N$ Lorentz particles. 
We'll call it the time of the first recollision. This is a stopping time with respect to the filtration $\cF_t^{X}$.

\item
$\sigma_2$ is the first time when in the construction of the Markovian flight processes a compensating scattering occurs. 
We'll call it the time of the first shadowed scattering. This is a stopping time with respect to the largest filtration $\cF_t^{X,Y}$.

\item 
$\sigma_3$ is the first time when a Markovian flight trajectory encounters a virtual scatterer which would have caused an earlier scattering event of one of the Markovian flight processes. This is a stopping time with respect to the filtration $\cF_t^{Y}$.

\item 
$\sigma_4$ is the first time a scattering of one of the Markovian flight processes happens within the $r$-neighbourhood of the union of the past trajectories of all flight processes. (This kind of event is mechanically inconsistent.) This is a stopping time with respect to the filtration $\cF_t^{Y}$.

\item
$\sigma$ is the time of the first mismatch between the Lorentz trajectories $X(t)$ and the coupled Markovian flight trajectories $Y(t)$. This is (a priori) a stopping time with respect to the largest filtration $\cF_t^{X,Y}$.
 
\end{enumerate}
\noindent
Although these are stopping times with respect to different filtrations it clearly follows from the construction of the coupling that 
\begin{align*}
\sigma_1
\ind{\sigma_1<\sigma_2}
=
\sigma_3
\ind{\sigma_3<\sigma_4}
\qquad
\text{ and }
\qquad
\sigma_2
\ind{\sigma_2<\sigma_1}
=
\sigma_4
\ind{\sigma_4<\sigma_3}.
\end{align*}
Hence, $\min\{\sigma_1,\sigma_2\}=\min\{\sigma_3,\sigma_4\}$
and thus, in fact 
\begin{align}
\label{sigma}
\sigma
=
\min\{\sigma_3,\sigma_4\}.
\end{align}
Although by definition $\sigma$ is a priori adapted to the joint filtration $\cF^{X,Y}_t$, due to the particularities of the coupling construction, according to \eqref{sigma} it is actually a stopping time with respect to the filtration of the Markovian flight trajectories $\cF^{Y}_t$ which makes it suitable to purely probabilistic control. In what follows we will use the expression \eqref{sigma} as definition of the first mismatch time $\sigma$. 

\begin{proposition}
\label{prop:early-stopping}
There exists an absolute constant $C<\infty$, such that for any $r>0$, $N, T<\infty$ obeying \eqref{rT<1},
the following bound holds
\begin{align}
\label{early-stopping}
\probab{\sigma<T}
\leq C r (N T+N^2 w^{-1}),  
\end{align}
where
\begin{align}
\label{minang}
w:= 
2\min_{1\leq i<j\leq N} \arcsin \sqrt{(1-v_i\cdot v_j)/2}
\end{align}
is the minimum angle between any two of the starting velocities.
\end{proposition}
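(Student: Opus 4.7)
The plan is to reduce $\probab{\sigma<T}$ to purely probabilistic bounds involving only the independent Markovian flight processes, exploiting the identity \eqref{sigma}, $\sigma=\min\{\sigma_3,\sigma_4\}$. Since both $\sigma_3$ and $\sigma_4$ are stopping times with respect to $\cF_t^{Y}$, the bound does not see the mechanical dynamics nor the quenched environment: the entire estimate is carried out in the joint law of the $N$ independent Markovian flights $Y_1,\dots,Y_N$ emanating from the origin with prescribed initial velocities $v_1,\dots,v_N$. Concretely, I will prove
\begin{align*}
\probab{\sigma_3 < T} + \probab{\sigma_4 < T} \leq Cr(NT+N^2 w^{-1})
\end{align*}
by a union bound, indexing each unwanted event by the ordered pair $(i,j)$ of trajectories involved: either trajectory $Y_i$ interferes with its own past ($i=j$), or a scattering of $Y_j$ lies in the $r$-tube of a distinct $Y_i$, or $Y_j$ enters the $r$-neighbourhood of a past scatterer centre $Y'_{i,k}$ (the cases $i\neq j$).

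For the diagonal contributions $i=j$, each term is exactly the single-trajectory recollision/shadowed-scattering bound of \cite{lutsko-toth-2020}: for one Markovian flight run up to time $T$, the probability that its $r$-tube self-intersects, or that a compensating scattering occurs within its own $r$-neighbourhood, is of order $rT$ under the assumption \eqref{rT<1}. A union bound over $1\leq i\leq N$ then produces the $NrT$ summand.

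For the off-diagonal contributions I fix a pair $(i,j)$ with $i\neq j$ and estimate the probability that, within time $T$, the tubes swept by $Y_i$ and $Y_j$ or the scatterer centres of one and the tube of the other come within distance $r$ of each other. The two flights are independent, start at the origin, and their initial velocities $v_i,v_j$ subtend an angle $\alpha_{ij}\geq w$. The key geometric observation is that the two rectilinear initial segments diverge linearly at angular rate $\alpha_{ij}$, so until they have separated by more than $2r$ — an initial window of length $\lesssim r/\alpha_{ij}$ — their tubes overlap, and within this window the probability of an interfering event is $O(r/\alpha_{ij})$. After this initial separation phase, I use transience of the three-dimensional Markovian flight (this is where $d=3$ enters) together with an isotropy/Fubini argument on the two independent occupation measures to show that the residual probability of a later encounter is $O(r)$, hence also $O(r/w)$. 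Summing the resulting pairwise bound $Cr/w$ over the $\binom{N}{2}$ pairs yields the $N^2r w^{-1}$ summand.

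The main obstacle is the off-diagonal pairwise estimate: quantifying precisely the $O(r/w)$ bound for two independent Markovian flights with common origin and small initial angular separation. The initial-separation contribution is geometrically transparent, but one must additionally verify that once the flights have separated past the $2r$ threshold the independent subsequent dynamics do not produce further interaction of worse order than $r$, which relies on standard transience/hitting estimates for the three-dimensional Markovian flight in the spirit of those developed in \cite{lutsko-toth-2020}.
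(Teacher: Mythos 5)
Your proposal is correct and follows essentially the same route as the paper: the reduction via $\sigma=\min\{\sigma_3,\sigma_4\}$ to a purely Markovian estimate, a union bound separating single-trajectory self-interference events (each of order $rT$, quoted from \cite{lutsko-toth-2020}) from pairwise interference events, and, for each pair, the split into an initial window of length $O(r/w)$ governed by the deterministic angle between $v_i$ and $v_j$ plus later encounters of order $O(r)$ controlled by Green-function/transience estimates --- which is exactly the paper's decomposition into the events $A_i$ and $B_{i,j}=B_{I}\cup B_{II}\cup B_{III}\cup B_{IV}$ carried out in the Appendix. The only part left schematic in your sketch, the $O(r)$ bound for post-separation encounters, is precisely where the paper invokes the explicit hitting bounds \eqref{discrete-hitting-bound}--\eqref{continuous-hitting-bound} and uses the standing assumption \eqref{rT<1} to absorb the $r^{d-1}T$ term arising in $d=3$.
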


\begin{proof}
Let for $1\leq i\leq N$, respectively, for  $1\leq i\not= j\leq N$
\begin{align}
\label{Aidef}
A_{i}
&
:=
\big\{\min\{\abs{Y_i(t)-Y_{i,k}} \, : \, 0<\theta_{i,k}<T, \ \  t\in(0,\theta_{i,k-1})\cup(\theta_{i,k+1},T)\}<2r\big\}
\\[10pt]
\label{Bijdef}
B_{i,j}
&
:=
\big\{ \min \{\abs{Y_i(t)-Y_{j,k}} \,:\, 0<\theta_{j,k}<T, \ \  0<t<T \}<2r\big\}
\end{align}
Obviously, 
\begin{align}
\label{union}
\big\{\min\{\sigma_3,\sigma_4\}<T\big\}
& 
\subseteq
\big(\bigcup_{1\leq i\leq N} A_{i}\big)
\bigcup
\big(\bigcup_{1\leq i\not=j\leq N} B_{i,j}\big).
\end{align}
By careful application of the Green function estimates of section 3 in  \cite{lutsko-toth-2020} we get the bounds
\begin{align}
\label{greenbound1}
&
\probab{A_i}\leq C rT,
\\[10pt]
\label{greenbound2}
&
\probab{B_{i,j}} \leq C rw^{-1},
\end{align}
with some universal constant $C<\infty$.

The bound \eqref{greenbound1} is explicitly stated in Corollary 1 of Lemma 4 (on page 608) of \cite{lutsko-toth-2020}. We do not repeat that proof here. When proving the bound \eqref{greenbound2} one has to take into account that the directions of the first flights of $Y_i$ and $Y_j$ are deterministic, $v_i$, respectively, $v_j$, and the angle between these two directions determines the probability of interference between the two trajectories  during the first free flights. Otherwise, the details of the proof of \eqref{greenbound2} are very similar to those in \cite{lutsko-toth-2020} but not quite directly quotable from there. We provide these details in the Appendix. 

Finally \eqref{early-stopping} follows from \eqref{union}, \eqref{greenbound1},  \eqref{greenbound2} by a straightforward union bound.
\end{proof}

\section{Proof of Theorem \ref{thm:sqip}}
\label{Proof of Theorem 3}

The clue to the proof is replacing averaging with respect to the random initial velocity in quenched (typical, a.s.) environment by a strong law of large numbers applied to sufficiently many annealedly sampled trajectories, which by the coupling construction are (with sufficiently high probability) identical with i.i.d. Markovian flight trajectories. The subtleties of this "replacement procedure" are detailed in the present section.  
The main technical ingredients are the Green function estimates \eqref{greenbound1}, \eqref{greenbound2} of Proposition \ref{prop:green}. 
\subsection{Triangular array of processes}
\label{ss:Triangular array of processes}

Let now $\varepsilon_n\to0$, $r_n=\varepsilon_n^{d/(d-1)}$, $T_n\to\infty$, $\beta_n\in(0,1]$ be as in \eqref{sqip-cond}, and  choose an increasing sequence $N_n$ such that 
\begin{align}
\label{Nn-large}
(\log n)^{-1} N_n \to\infty
\end{align}
and the stronger summability 
\begin{align}
\label{strong-seq-summable}
\sum_{n}
\big(
N_n r_n T_n 
+
N_n^2 \left(r_n \beta_n^{-1}\right)^{(d-1)/d} 
\big)
<\infty
\end{align}
still holds. (Given \eqref{sqip-cond} this can be done.)

Assume that the probability space $(\Omega, \cF, \boP)$ supports a \emph{triangular array} of processes 
\begin{align*}
\big\{\, \{\, (X_{n,j}(\cdot), Y_{n,j}(\cdot)): \quad 1\leq j\leq N_n\, \}: n\geq 1\, \big\}
\end{align*}
row-wise constructed as in section \ref{s:Construction and Quenched Coupling}, with parameters $\varepsilon_n$, $r_n$, $\beta_n$,  and with i.i.d. initial velocities 
\begin{align}
\label{v-in-B}
v_{n,j} \sim {\tt UNI(}B_n{\tt)}, \qquad 1\leq j\leq N_n, 
\end{align}
which are also independent of all other randomness in the row.

Note that 
\begin{enumerate}[-]
\item
The row-wise construction, and thus the joint distribution of $\{\, (X_{n,j}(\cdot), Y_{n,j}(\cdot)): \, 1\leq j\leq N_n\, \}$ is prescribed.
\item
The PPP $\varpi_n:=\varpi_{\vareps_n}$ are obtained by rescaling \emph{the same realization} of the PPP $\varpi$. This makes the sequence of couplings \emph{quenched}.
\item
The joint distribution of the probabilistic ingredients - apart of $\varpi$ - in different rows is irrelevant. 
\end{enumerate}

\begin{lemma}
\label{lem:hoeffding-borel-cantelli}
Let the sequence $N_n\in\N$ be as in \eqref{Nn-large}
and $\{\,\{\Upsilon_{n,j}: 1\leq j\leq N_n\,\}: n\geq1\,\}$ a jointly defined triangular array of real valued, uniformly bounded, row-wise i.i.d. zero-mean random variables:
\begin{align*}
& 
\probab{\abs{\Upsilon_{n,j}}\leq M}=1, 
&& 
\expect{\Upsilon_{n,j}}=0.
\end{align*}  
Then, 
\begin{align*}
\probab{
\lim_{n\to\infty}
N_n^{-1}
\sum_{j=1}^{N_n}\Upsilon_{n,j}
\to0
}
=1.
\end{align*}
\end{lemma}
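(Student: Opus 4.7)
The plan is to combine a Hoeffding-type concentration bound with the Borel--Cantelli lemma, exploiting the fact that $N_n$ grows faster than $\log n$. Since the $\Upsilon_{n,j}$ are row-wise i.i.d., bounded in absolute value by $M$, and have zero mean, Hoeffding's inequality gives, for every $n$ and every $\delta>0$,
\begin{align*}
\probab{\Bigl|N_n^{-1}\sum_{j=1}^{N_n}\Upsilon_{n,j}\Bigr|>\delta}
\leq 2\exp\!\left(-\frac{N_n\delta^2}{2M^2}\right).
\end{align*}

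Next I would use the growth condition \eqref{Nn-large}. For any fixed $\delta>0$, since $N_n/\log n\to\infty$, for all $n$ large enough we have $N_n\delta^2/(2M^2)\geq 2\log n$, so the right-hand side above is bounded by $2/n^2$, which is summable. By the first Borel--Cantelli lemma, the event
\begin{align*}
E_\delta:=\Bigl\{\Bigl|N_n^{-1}\sum_{j=1}^{N_n}\Upsilon_{n,j}\Bigr|>\delta \text{ infinitely often}\Bigr\}
\end{align*}
has $\probab{E_\delta}=0$.

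Finally, to promote this from ``for each fixed $\delta>0$'' to almost sure convergence, I would apply the above to the countable sequence $\delta=1/k$, $k\in\N$. The union $\bigcup_{k\geq1} E_{1/k}$ still has probability zero, and on its complement $\limsup_{n\to\infty}|N_n^{-1}\sum_{j=1}^{N_n}\Upsilon_{n,j}|\leq 1/k$ for every $k$, i.e.\ the row averages converge to $0$. This yields the claim.

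There is no real obstacle here; the argument is routine. The only subtlety is that the hypothesis $N_n\gg\log n$ is exactly what is needed to make the Hoeffding tail summable, which is why \eqref{Nn-large} was built into the choice of $N_n$ in the previous subsection; a weaker growth (say $N_n$ merely tending to infinity) would not suffice for a.s.\ convergence without additional moment assumptions.
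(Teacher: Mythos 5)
Your argument is correct and coincides with the paper's proof: Hoeffding's inequality for the bounded row-wise i.i.d.\ averages, summability of the tails via the growth condition \eqref{Nn-large}, and the first Borel--Cantelli lemma, with the passage to a countable grid $\delta=1/k$ (left implicit in the paper) spelled out. Nothing further is needed.
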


\begin{proof}
This is a triangular array version of Borel's SLLN, and a direct (and straightforward) consequence of Hoeffding's inequality and the Borel-Cantelli lemma. By Hoeffding's inequality, for any $\delta>0$
\begin{align*}
\probab{ \pm N_n^{-1}\sum_{j=1}^{N_n}\Upsilon_{n,j} > \delta}
\leq
e^{- \delta^2 N_n/(2M^2)}.
\end{align*}
Hence, due to \eqref{Nn-large} and Borel-Cantelli, for any $\delta>0$
\begin{align*}
\probab{
\varlimsup_{n\to\infty}
\pm
N_n^{-1}
\sum_{j=1}^{N_n}\Upsilon_{n,j}
>\delta 
}
=0.
\end{align*} 
\end{proof}

\begin{proposition}
\label{prop:as-limits}
Almost surely, for any $F\in\cC_0(\cC)$,
\begin{align}
\label{as-limit-Y}
&
\lim_{n\to\infty}
\Big(
N_n^{-1}\sum_{j=1}^n F(T_n^{-1/2} Y_{n,j}(T_n \cdot)) - \expect{F(T_n^{-1/2} Y_{n,1}(T_n \cdot))}
\Big)
=0
\\
\label{as-limit-X}
&
\lim_{n\to\infty}
\Big(
N_n^{-1}\sum_{j=1}^n F(T_n^{-1/2} X_{n,j}(T_n \cdot)) - \qexpect{F(T_n^{-1/2} X_{n,1}(T_n \cdot))}
\Big)
=0
\end{align}
\end{proposition}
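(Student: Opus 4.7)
The plan is to deduce both \eqref{as-limit-Y} and \eqref{as-limit-X} from Lemma \ref{lem:hoeffding-borel-cantelli} (Hoeffding combined with Borel--Cantelli) by exploiting two distinct row-wise independence structures. Neither the coupling construction of section \ref{s:Construction and Quenched Coupling} nor the tightness bound of Proposition \ref{prop:early-stopping} is needed at this stage; they will only be invoked subsequently, when one has to replace $\qexpect{F(T_n^{-1/2}X_{n,1}(T_n\cdot))}$ by $\expect{F(W(\cdot))}$.

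For \eqref{as-limit-Y}, recall from section \ref{ss:Quenched coupling with independent Markovian flight processes} that, in the full (annealed) distribution, the $N_n$ Markovian flight processes $\{Y_{n,j}(\cdot)\}_{j=1}^{N_n}$ in the $n$-th row are genuinely i.i.d. Hence the row-wise zero-mean, uniformly bounded random variables $\Upsilon_{n,j}:=F(T_n^{-1/2}Y_{n,j}(T_n\cdot)) - \expect{F(T_n^{-1/2}Y_{n,1}(T_n\cdot))}$, $\abs{\Upsilon_{n,j}}\leq 2\norm{F}_\infty$, satisfy the hypotheses of Lemma \ref{lem:hoeffding-borel-cantelli} verbatim.

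For \eqref{as-limit-X} the key observation is that, \emph{conditional on} $\varpi$, the Lorentz trajectories $X_{n,1},\dots,X_{n,N_n}$ are i.i.d.: each is a deterministic functional of the pair $(\varpi, v_{n,j})$, and the initial velocities $v_{n,j}\sim\mathtt{UNI}(B_n)$ are i.i.d. and independent of $\varpi$. Hoeffding's inequality applied under the quenched law $\qprobab{\cdot}$ therefore yields, for every $\delta>0$,
\begin{align*}
\qprobab{\abs{N_n^{-1}\sum_{j=1}^{N_n} F\big(T_n^{-1/2}X_{n,j}(T_n\cdot)\big) - \qexpect{F\big(T_n^{-1/2}X_{n,1}(T_n\cdot)\big)}}>\delta}
\leq 2 e^{-\delta^2 N_n/(2\norm{F}_\infty^2)}.
\end{align*}
Taking $\boP$-expectation preserves this bound, assumption \eqref{Nn-large} makes the right-hand side summable in $n$, and Borel--Cantelli supplies $\boP$-a.s. convergence for each fixed $F$ and each $\delta=1/m$.

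To upgrade ``a.s., for each $F$'' to ``a.s., for all $F\in\cC_0(\cC)$'' as claimed, I would invoke separability of $(\cC_0(\cC),\norm{\cdot}_\infty)$: fix a countable dense sequence $\{F_k\}$, discard the countable union of null sets produced by the above argument applied to each pair $(F_k,\delta=1/m)$, and extend to an arbitrary $F$ via the uniform estimate $\abs{N_n^{-1}\sum_j F(\cdot) - N_n^{-1}\sum_j F_k(\cdot)}\leq \norm{F-F_k}_\infty$, together with the analogous bound for the two expectations. I do not anticipate any genuine obstacle here: the proposition is essentially a two-regime repackaging of Hoeffding--Borel--Cantelli, using annealed independence of the $Y_{n,j}$'s for \eqref{as-limit-Y} and conditional-on-$\varpi$ independence of the $X_{n,j}$'s for \eqref{as-limit-X}. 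The substantive work -- where Proposition \ref{prop:early-stopping} and the summability condition \eqref{strong-seq-summable} will actually be used -- is the subsequent step of identifying $\qexpect{F(T_n^{-1/2}X_{n,1}(T_n\cdot))}$ with $\expect{F(W(\cdot))}$ in the limit, via the coupling between a single Lorentz trajectory and a single Markovian flight process and control of the first-mismatch probability.
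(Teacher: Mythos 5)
Your proposal is correct and follows essentially the same route as the paper: both limits are obtained from the Hoeffding--Borel--Cantelli lemma (Lemma \ref{lem:hoeffding-borel-cantelli}), using annealed i.i.d.-ness of the $Y_{n,j}$ for \eqref{as-limit-Y} and, conditionally on $\varpi$, i.i.d.-ness of the $X_{n,j}$ (deterministic functionals of $(\varpi,v_{n,j})$ with i.i.d.\ initial velocities) for \eqref{as-limit-X}, followed by separability of $(\cC_0(\cC),\norm{\cdot}_\infty)$ to pass from ``for each $F$, almost surely'' to ``almost surely, for all $F$''. Your observation that the coupling and Proposition \ref{prop:early-stopping} are not needed at this stage also matches the paper's structure.
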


\begin{proof}
The same statement with "for any $F\in\cC_0(\cC)$, almost surely" follows from Lemma \ref{lem:hoeffding-borel-cantelli}, noting that the triangular array of \emph{annealed} random variables 
\begin{align*}
\Upsilon_{n,j}
:=
F(T_n^{-1/2} Y_{n,j}(T_n \cdot)) - \expect{F(T_n^{-1/2} Y_{n,j}(T_n \cdot))}, 
\qquad 
1\leq j\leq N_n, 
\quad n\geq 1
\end{align*}
respectively, for almost all realizations of $\varpi$,  the triangular array of \emph{quenched} random variables 
\begin{align*}
\wt\Upsilon_{n,j,\varpi}
:=
F(T_n^{-1/2} X_{n,j}(T_n \cdot)) - \qexpect{F(T_n^{-1/2} X_{n,j}(T_n \cdot))}, 
\qquad 
1\leq j\leq N_n, 
\quad n\geq 1
\end{align*}
meet the conditions of the lemma. 

Going from "for any $F\in\cC_0(\cC)$, almost surely" to "almost surely,  for any $F\in\cC_0(\cC)$" we rely on separability of the Banach space $(\cC_0(\cC), \norm{\cdot}_\infty)$.
\end{proof}

\begin{proposition}
\label{prop:Donsker}
For any $F\in\cC_0(\cC)$,
\begin{align}
\label{Donsker}
\lim_{n\to\infty}
\expect{F(T_n^{-1/2} Y_{n,1}(T_n \cdot))}
=
\expect{W(\cdot))}.
\end{align}
\end{proposition}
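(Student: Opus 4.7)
The plan is to invoke the classical functional central limit theorem for compound Poisson / integrated jump Markov processes, specialised to $Y_{n,1}$. This is a standard Donsker-type result which underlies the annealed invariance principle of \cite{lutsko-toth-2020}, so only a brief outline is needed.

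First, I would decompose $Y_{n,1}(t) = v_{n,1}\,(t\wedge \wt\xi_{n,1,1}) + \wt Y_{n,1}(t)$, separating the initial free flight from the rest. Since $\wt\xi_{n,1,1}\sim {\tt EXP(}1{\tt)}$ has finite mean independent of $n$ and $\abs{v_{n,1}}=1$, the supremum norm of the first term on $[0,T_n]$ is at most $\wt\xi_{n,1,1}$, which is $\Ordo(1)$ in probability. Hence under the diffusive rescaling $T_n^{-1/2}(T_n\,\cdot)$ this contribution vanishes uniformly in probability as $T_n\to\infty$, and by boundedness and uniform continuity on compacts of $F\in\cC_0(\cC)$, it suffices to prove the same limit with $\wt Y_{n,1}$ in place of $Y_{n,1}$.

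Second, the process $\wt Y_{n,1}$ is the integral of a velocity Markov chain with i.i.d.\ ${\tt EXP(}1{\tt)}$ free-flight durations and i.i.d.\ ${\tt UNI(}S^{d-1}{\tt)}$ velocities, independent of $v_{n,1}$; in particular its law does not depend on $n$. The embedded i.i.d.\ random walk $S_k := \sum_{i=1}^k \wt\xi_i\, U_{i+1}$ has centred, square-integrable increments with covariance proportional to $I_d$ (namely $\boE(\wt\xi^2)\cdot d^{-1} I_d = (2/d)\,I_d$). Applying Donsker's theorem to $S_k$ and composing with the Poisson time change, whose renewal fluctuations are controlled by the LLN for the counting process, yields $T_n^{-1/2}\wt Y_{n,1}(T_n\,\cdot)\Rightarrow W(\cdot)$ weakly in $\cC$, with $W$ the Brownian motion carrying the covariance implicit in the paper's convention for ``standard'' (the same normalisation as on the right-hand side of Theorem~\ref{thm:aip}). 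Since $F\in\cC_0(\cC)$ is bounded continuous, weak convergence upgrades to convergence of expectations, i.e.\ \eqref{Donsker}.

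I do not foresee a real obstacle here: the result is just the classical diffusive limit for a $\R^d$-valued compound Poisson process. The only point worth noting is that the dependence on the initial velocity through $v_{n,1}\sim{\tt UNI(}B_n\tt{)}$ enters only through the initial $\Ordo(1)$-length free flight that gets rescaled away, so the conclusion is completely insensitive to the choice of $\beta_n\in(0,1]$ and to the fact that $B_n$ may shrink with $n$.
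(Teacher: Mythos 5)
Your proposal is correct and is essentially the paper's own argument: the paper's proof consists of the single remark that this \emph{is} Donsker's theorem for the Markovian flight process, and your outline (discarding the $\Ordo(1)$ initial free flight so the $B_n$-dependence disappears, then applying the random-walk invariance principle to the embedded walk with the renewal time change) is just the standard way of filling in that citation. Your parenthetical about the covariance normalisation is well taken -- the limiting covariance is $(2/d)\,t\,I_d$ rather than $t\,I_d$, so ``standard'' in the paper is to be read with the same normalisation convention as in Theorem~\ref{thm:aip} and \cite{lutsko-toth-2020} -- but this does not affect the argument.
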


\begin{proof}
This is Donsker's theorem. 
\end{proof}

\begin{proposition}
\label{prop:X=Y}
\begin{align}
\label{X=Y}
\probab{\max\{n: \sigma_n<T_n\}<\infty}=1.
\end{align}
That is: almost surely, for all but finitely many $n$
\begin{align}
\label{X=Y-bis}
X_{n,j}(t)=Y_{n,j}(t), \quad 1\leq j\leq N_n, \quad 0\leq t\leq T_n.
\end{align}
\end{proposition}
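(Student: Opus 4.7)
The statement is equivalent to $\probab{\sigma_n < T_n \text{ infinitely often}} = 0$, so my plan is to apply the first Borel--Cantelli lemma, i.e.\ to show
\[
\sum_{n=1}^{\infty} \probab{\sigma_n < T_n} < \infty.
\]
The key input is the quantitative bound \eqref{early-stopping} of Proposition~\ref{prop:early-stopping}, applied row by row of the triangular array from Section~\ref{ss:Triangular array of processes}. The side condition \eqref{rT<1} holds for all large $n$ by virtue of \eqref{sqip-cond}, so only the tail of the series matters and the application is legitimate.

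My first step is to condition on the $N_n$-tuple of initial velocities $(v_{n,j})_{1\leq j\leq N_n}$ in row $n$. With these frozen, Proposition~\ref{prop:early-stopping} yields
\[
\probab{\sigma_n < T_n \mid v_{n,1},\dots,v_{n,N_n}} \leq C r_n \bigl(N_n T_n + N_n^2 w_n^{-1}\bigr),
\]
where $w_n$ is now a random variable: the minimum pairwise angular separation among $N_n$ independent $\mathtt{UNI}(B_n)$-samples. The main obstacle is that $w_n^{-1}$ may blow up when two initial velocities happen to be nearly parallel, so integrating out the conditioning naively does not work. To circumvent this I would split on whether $w_n$ lies above or below an auxiliary threshold $\phi_n$ (to be tuned). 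On $\{w_n \geq \phi_n\}$ the conditional estimate degrades only to $C r_n (N_n T_n + N_n^2 \phi_n^{-1})$; on the complementary event I fall back on the trivial bound $\leq 1$ combined with the following geometric estimate. Since $B_n \subset S^{d-1}$ is a spherical cap of angular radius $\beta_n$, the probability that two independent $\mathtt{UNI}(B_n)$-samples lie within angular distance $\phi$ of one another is $O((\phi/\beta_n)^{d-1})$; a union bound over the $\binom{N_n}{2}$ pairs gives $\probab{w_n < \phi_n} \leq C N_n^2 (\phi_n/\beta_n)^{d-1}$.

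Assembling the two contributions yields
\[
\probab{\sigma_n < T_n} \leq C N_n r_n T_n + C N_n^2 \bigl((\phi_n/\beta_n)^{d-1} + r_n \phi_n^{-1}\bigr),
\]
and balancing the two $N_n^2$-terms forces the optimal choice $\phi_n := (r_n \beta_n^{d-1})^{1/d}$, at which each reduces to $(r_n/\beta_n)^{(d-1)/d}$. I therefore arrive at
\[
\probab{\sigma_n < T_n} \leq C \bigl( N_n r_n T_n + N_n^2 (r_n/\beta_n)^{(d-1)/d}\bigr),
\]
which is exactly the $n$-th summand appearing in the stronger summability hypothesis \eqref{strong-seq-summable}. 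Consequently $\sum_n \probab{\sigma_n < T_n} < \infty$ and Borel--Cantelli delivers \eqref{X=Y}. The pathwise identity \eqref{X=Y-bis} then follows immediately from the very definition $\sigma = \inf\{t : X(t) \neq Y(t)\}$.
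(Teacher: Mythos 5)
Your proposal is correct and follows essentially the same route as the paper: the same Borel--Cantelli reduction, the same split on the minimum angle $w_n$ against a threshold (your $\phi_n=(r_n\beta_n^{d-1})^{1/d}$ is exactly the paper's $\alpha_n=r_n^{1/d}\beta_n^{(d-1)/d}$), the same spherical-cap union bound for $\probab{w_n<\phi_n}$, and the same application of Proposition~\ref{prop:early-stopping} leading to the summand of \eqref{strong-seq-summable}. Your explicit conditioning on the initial velocities merely spells out what the paper's intersection with $\{w_n\geq\alpha_n\}$ does implicitly.
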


\begin{proof}
Let 
\begin{align*}
\alpha_n:=r_n^{1/d} \beta_n^{(d-1)/d}. 
\end{align*}
With this choice 
\begin{align*}
r_n \alpha_n^{-1} 
=
(\alpha_n \beta_n^{-1})^{d-1}
=
(r_n \beta_n^{-1})^{(d-1)/d}
\end{align*}
As in \eqref{minang}, denote
\begin{align*}
w_n:=
2 \min_{1\leq i<j\leq N_n} \arcsin \sqrt{(1-v_{n,i}\cdot v_{n,j})/2}
\end{align*}
the minimum angle between any two of the starting velocities.
Then, obviously 
\begin{align*}
\probab{ \sigma_n < T_n }
\leq
\probab{ w_n < \alpha_n }
+
\probab{ \{\sigma_n<T_n\} 
\cap 
\{w_n \geq \alpha_n\}}. 
\end{align*}
Recall \eqref{solid-angle} and \eqref{v-in-B}. 
For $1\leq i<j\leq N_n$ we have from elementary geometry
\begin{align*}
\probab{\arcsin \sqrt{(1-v_{n,i}\cdot v_{n,j})/2} < \alpha_n}
<
C(\alpha_n \beta_n^{-1})^{d-1}, 
\end{align*}
and hence by a union bound
\begin{align*}
\probab{ w_n < \alpha_n }
\leq
CN_n^2 (\alpha_n \beta_n^{-1})^{d-1}. 
\end{align*}
On the other hand, by the stopping time bound \eqref{early-stopping} of Proposition \ref{prop:early-stopping}, 
\begin{align*}
\probab{ \{\sigma_n<T_n\} 
\cap 
\{w_n \geq \alpha_n\}}
\leq
C (N_n r_n T_n +  N_n^2 r_n  \alpha_n^{-1}). 
\end{align*}
Putting these together, 
\begin{align*}
\probab{ \sigma_n < T_n }
\leq
C (N_n r_nT_n +  N_n^2 (r_n \beta_n^{-1})^{(d-1)/d}).
\end{align*}
The claim of Proposition \ref{prop:X=Y} follows from Borel-Cantelli, using \eqref{strong-seq-summable}.
\end{proof}

Finally, putting together \eqref{as-limit-X}, \eqref{as-limit-Y} of Proposition \ref{prop:as-limits}, \eqref{Donsker} of Proposition \ref{prop:Donsker} and \eqref{X=Y}/\eqref{X=Y-bis} of Proposition \ref{prop:X=Y} we get that assuming \eqref{sqip-cond}, for almost all realizations of the PPP $\varpi$, for any $F\in\cC_0(\cC)$, 
\begin{align*}
\lim_{n\to\infty}
\qexpect{F(T_n^{-1/2} X_{n,1}(T_n \cdot))}
=
\expect{F(W(\cdot))},
\end{align*}
which concludes the proof of Theorem \ref{thm:sqip}.
\qed

\section*{Appendix: Proof of \eqref{greenbound2}}
\label{s: Appendix: Proof of greenbound2}

Recall: $Y_i$ and $Y_j$ are two independent Markovian flight processes with deterministic initial velocities $v_i,v_j\in S^{d-1}$ closing an angle $2\arcsin\sqrt{1-v_i\cdot v_j}>w$. Since $w$ is the minimum of angles between any pair of $N\gg1$ different directions in $\R^d$, we can assume that $0<w<\pi/6$ and thus  $\sin w > w/2$.  

We break up the right hand side of \eqref{Bijdef} as 
\begin{align*}
B_{i,j}
=
B_{I} \cup B_{II} \cup B_{III} \cup B_{IV}, 
\end{align*}
where 
\begin{align*}
B_{I}
&
:=
\{\min_{0< t< \theta_{i,1}\land T} 
\abs{Y_i(t)-Y_{j,1}}<2r, \ \ \theta_{j,1}<T\}
&&
\subseteq
\{\min_{0< t< \theta_{i,1}} \abs{Y_i(t)-Y_{j,1}} <2r\}
=:
\wt B_{I}
\\
B_{II}
&
:=
\{\min_{{0< t< \theta_{i,1} \land T}\atop{\theta_{j,1}<\theta_{j,k}<T}} \abs{Y_i(t)-Y_{j,k}}<2r\}
&&
\subseteq
\{\min_{{0< t< \theta_{i,1}}\atop {2\leq k<\infty}}
\abs{Y_i(t)-Y_{j,k}} <2r\}
=:
\wt B_{II}
\\
B_{III}
& 
:=
\{\min_{\theta_{i,1}< t<T} 
\abs{Y_i(t)-Y_{j,1}}<2r, \ \ \theta_{j,1}<T\}
&&
\subseteq
\{\min_{\theta_{i,1}\leq t< \infty} \abs{Y_i(t)-Y_{j,1}} <2 r\} 
=:
\wt B_{III}
\\
B_{IV}
& 
:=
\{\min_{{\theta_{i,1} < t< T}\atop{\theta_{j,1}<\theta_{j,k}<T}} \abs{Y_i(t)-Y_{j,k}}<2r\}
&&
\end{align*}
and bound in turn the probabilities of these events. 

\medskip
\noindent
I: Obviously 
\begin{align*}
\wt B_{I}
\subseteq
\{\theta_{j,1}< 4 r w^{-1}\}, 
\end{align*}
and hence, since $\theta_{j,1}\sim{\tt EXP(1)}$, 
\begin{align}
\label{probab-BI}
\probab{B_{I}}
\leq
\probab{\wt B_{I}}
\leq 
Cr w^{-1}.
\end{align}

\medskip
\noindent
To estimate the probabilities of the events $\wt B_{II}, \wt B_{III}, \wt B_{IV}$ first note that the processes  
\begin{align*}
& 
t\mapsto 
\wt Y_{i}(t):=
Y_{i}(\theta_{i,1}+t)-Y_{i,1}, 
&&
t\mapsto 
\wt Y_{j}(t):=
Y_{j}(\theta_{j,1}+t)-Y_{j,1}, 
&& 
t\geq0, 
\end{align*}
are distributed as a Markovian process flight $t\mapsto Y(t)$, $t\geq0$, with ${\tt UNI}(S^{d-1})$-distributed initial velocity. They are independent between them and also independent of the random variables $\theta_{i,1}, \theta_{j,1}, Y_{i,1}, Y_{j,1}$.  

We will rely on the following Green function estimates explicitly stated in \cite{lutsko-toth-2020}. 

\begin{proposition}
\label{prop:green}
Let $t\mapsto \wt Y(t)$, $t>0$, be a Markovian flight process with initial position $\wt Y(0)=0$ and ${\tt UNI}(S^{d-1})$-distributed initial velocity. Denote by $\wt Y_k$, $k\geq1$, its position at the successive scattering events. Let $A\subset \R^d$ be open bounded. Then the following bounds hold. 
\begin{align}
\label{discrete-hitting-bound}
&
\probab{\{k>0: \wt Y_k\in A\}\not=\emptyset}
\leq
\expect{\abs{\{k>0: \wt Y_k\in A\}}}
\leq
\int_{A} \gamma (x)\, dx,
\\[10pt]
\label{continuous-hitting-bound}
&
\probab{\{t>0: \wt Y(t)\in A\}\not=\emptyset}
\leq
r^{-1}
\expect{\abs{\{t>0: \wt Y(t)\in A_r\}}}
\leq
r^{-1}
\int_{A_r} \gamma(x)\, dx,
\end{align}
where $\gamma:\R^d\to\R_+$, 
\begin{align*}
\gamma(x)
:=
C (\abs{x}^{-d+1} + \abs{x}^{-d+2})
\end{align*}
with a suitable $C<\infty$, and 
$A_r:=\{x\in\R^d: \, \mathrm{dist}(x,A)<r\}$.
\end{proposition}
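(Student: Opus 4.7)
The first inequality in each of \eqref{discrete-hitting-bound} and \eqref{continuous-hitting-bound} is immediate from Markov's inequality applied to the cardinality (respectively, Lebesgue measure) of the hitting set: $\probab{\text{set} \neq \emptyset} = \probab{|\text{set}| \geq 1} \leq \expect{|\text{set}|}$. The substance lies in the pointwise Green-function bounds that follow.

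For the second inequality in \eqref{discrete-hitting-bound}, the plan is to write $\expect{|\{k > 0: \wt Y_k \in A\}|} = \sum_{k \geq 1} \probab{\wt Y_k \in A} = \int_A U(x)\, dx$, where $U(x) := \sum_{k \geq 1} p_k(x)$ and $p_k$ is the density of $\wt Y_k$. Since $\wt Y_k$ is a sum of $k$ i.i.d.\ vectors, each an ${\tt EXP(1)}$-distributed length in an independent ${\tt UNI(}S^{d-1}{\tt)}$ direction, a single step has density $\psi(x) = (\omega_{d-1} |x|^{d-1})^{-1} e^{-|x|}$, and $p_k = \psi^{\ast k}$. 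The target reduces to the pointwise bound $U(x) \leq C(|x|^{-(d-1)} + |x|^{-(d-2)})$.

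For the second inequality in \eqref{continuous-hitting-bound}, I would exploit unit speed: if $\wt Y(t_0) \in A$ then $|\wt Y(t_0+s) - \wt Y(t_0)| \leq |s|$, so $\wt Y(t_0+s) \in A_r$ for $|s| < r$. This gives the deterministic pathwise inequality $r \cdot \ind{\{t > 0: \wt Y(t) \in A\} \neq \emptyset} \leq |\{t > 0: \wt Y(t) \in A_r\}|$, from which the first step follows by taking expectations. A short direct computation then shows that, conditionally on $\wt Y_k = y$, the expected occupation density during leg $k+1$ at $x$ is exactly $\psi(x - y)$ (the ${\tt EXP(1)}$ flight duration combined with isotropic direction reproduces $\psi$), so the continuous Green function satisfies $G(x) = \sum_{k \geq 0}(p_k \ast \psi)(x) = U(x)$. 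The continuous bound therefore reduces to the very same pointwise estimate on $U$.

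The main obstacle is thus precisely the pointwise bound on $U$. The $|x|^{-(d-1)}$ term is produced by the first-step density $\psi$ alone, for which $\psi(x) \leq C|x|^{-(d-1)}$ globally. The tail $\sum_{k \geq 2} p_k$ is the hard part; I would extract it by Fourier inversion of $\hat\psi^{\,2}/(1 - \hat\psi)$, using that $\hat\psi$ is radial with $\hat\psi(\xi) \sim 1 - c |\xi|^2$ as $|\xi| \to 0$ and with sufficient decay at infinity. In $d = 3$, $1/(1-\hat\psi)$ inherits a $|\xi|^{-2}$ singularity at the origin, whose inverse Fourier transform is the Newton kernel $C|x|^{-1} = C|x|^{-(d-2)}$, exactly matching the claim. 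As emphasized in the ``remarks on dimension'' of \cite{lutsko-toth-2020}, this is precisely where transience and thus $d = 3$ enter; $d = 2$ requires the substantially more delicate treatment of \cite{lutsko-toth-2021, lutsko-toth-2024}.
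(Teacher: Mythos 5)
Two preliminary remarks on your framing. First, the paper itself contains no proof of Proposition \ref{prop:green}: it is imported as "explicitly stated in \cite{lutsko-toth-2020}" (Section 3 of that paper), so the comparison is really with that source. Second, your reductions are correct and are the natural ones: Markov's inequality for the first inequality in \eqref{discrete-hitting-bound}; the unit-speed observation $r\,\ind{\exists t>0:\,\wt Y(t)\in A}\le \abs{\{t>0:\wt Y(t)\in A_r\}}$ for the first inequality in \eqref{continuous-hitting-bound}; and the identity that the expected occupation density of a single flight leg equals the one-step transition density $\psi(x)=(\omega_{d-1}\abs{x}^{d-1})^{-1}e^{-\abs{x}}$, so that both Green functions coincide with $U(x):=\sum_{k\ge1}\psi^{*k}(x)$ and everything reduces to the pointwise bound $U(x)\le C(\abs{x}^{-d+1}+\abs{x}^{-d+2})$.

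The genuine gap is precisely in that pointwise bound, which is the entire substance of the proposition, and your Fourier plan does not work as stated. In $d=3$ one has $\hat\psi(\xi)=\arctan(\abs{\xi})/\abs{\xi}$, which decays only like $\abs{\xi}^{-1}$ at infinity; hence $\hat\psi^{\,2}/(1-\hat\psi)$ decays like $\abs{\xi}^{-2}$ and is \emph{not} in $L^1(\R^3)$, so it cannot simply be "Fourier inverted" to yield pointwise control. Moreover, an $L^1$-Fourier argument can at best give a bound by a constant, whereas the claimed $\gamma(x)=C(\abs{x}^{-2}+\abs{x}^{-1})$ contains two distinct pieces of information your sketch does not deliver: (i) the unbounded singularity at the origin, which is carried by the low convolution powers and is obtained by direct convolution estimates (e.g.\ $\psi^{*2}(x)\le C\abs{x}^{-1}$ because $\int_{\R^3}\abs{y}^{-2}\abs{x-y}^{-2}\,dy=C\abs{x}^{-1}$, and similarly for $\psi^{*3}$, whose Fourier transform still fails to be integrable); and (ii) the \emph{decay} $U(x)\le C\abs{x}^{-1}$ as $\abs{x}\to\infty$, which is a Newtonian/transient-Green-function asymptotic and requires either subtracting the explicit singular part $3/\abs{\xi}^2$ of $1/(1-\hat\psi)$ at $\xi=0$ and controlling the remainder, or a quantitative local-CLT/renewal estimate on $\psi^{*k}$ summed over $k$. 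Saying that the $\abs{\xi}^{-2}$ singularity "inverts to the Newton kernel" is the right heuristic, but as written it is not a proof, and this is exactly the part that \cite{lutsko-toth-2020} works out and that the present paper therefore only cites.
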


\medskip
\noindent
II: Conditioning on $\theta_{i,1}$ and using \eqref{discrete-hitting-bound} we obviously get 
\begin{align*}
\probab{\wt B_{II}}
&
\leq 
\expect{\sup_{ {x\in\R^d}\atop{v\in S^{d-1}} } \condprobab{\min_{{0< t< \theta_{i,1}}\atop{1\leq k<\infty}}\abs{x+ t v-\wt Y_{k}}<2r}{\theta_{i,1}}
}
\\
&
\leq
\expect{\sup_{ {x\in\R^d}\atop{v\in S^{d-1}} } \int_{\R^{d}} \gamma(y) \ind{\min_{0<t<\theta_{i,1}}\abs{x+vt -y}<2r } \, dy}
\\
&
=
\int_0^\infty e^{-s} 
\int_{\R^{d}} \gamma(y) \ind{\min_{-s/2 <t<s/2} \abs{vt -y}<2r } \, dy \, ds
&&
(v\in S^{d-1}).
\end{align*}
In the last step we use the fact that the function $y\mapsto \gamma (y)$ is rotation invariant, radially decreasing, and $\theta_{i,1}\sim {\tt EXP}(1)$.  
Finally, by straightforward computations
\begin{align}
\label{probab-BII}
\probab{B_{II}}
\leq
\probab{\wt B_{II}}
\leq 
Cr.
\end{align}

\medskip
\noindent
III: 
Now we condition on $Z:=Y_{i,1}-Y_{j_1}$ and use \eqref{continuous-hitting-bound} to get 
\begin{align*}
\probab{\wt B_{III}}
&
=
\expect{
\condprobab{\min_{0< t< \infty}\abs{Z-\wt Y(t))}<2r}{Z}}
\\
&
\leq
r^{-1} \expect{\int_{\R^{d}} \gamma(y) \ind{\abs{Z -y}<3r } \, dy}
\\
&
\leq
r^{-1} 
\int_0^\infty e^{-s} 
\int_{\R^{d}} \gamma (y) \ind{\abs{sv  -y}<3r } \, dy\, ds
&&
(v\in S^{d-1}).
\end{align*}
In the last step we use again the fact that the function $y\mapsto \gamma(y)$ is rotation invariant, radially decreasing, and also 
\begin{align*}
\abs{Z}
=
\abs{v_i \theta_{i,1}-v_j \theta_{j,1}}
\geq 
\abs{\theta_{i,1}-\theta_{j,1}}    
\sim{\tt EXP}(1). 
\end{align*}
Finally, by straightforward computations
\begin{align}
\label{probab-BIII}
\probab{B_{III}}
\leq
\probab{\wt B_{III}}
\leq 
Cr.
\end{align}

\medskip
\noindent
IV: 
We proceed similarly. This time we have to use both bounds \eqref{discrete-hitting-bound} and \eqref{continuous-hitting-bound} of Proposition \ref{prop:green}. However, noting that in dimensions $d<5$, when estimating $\probab{B_{IV}}$,  we can't extend the integrals to the whole $\R^d$. In dimensions $d=3$ and $d=4$ we will see dependence on $T$ in the upper bound. 
\begin{align*}
\probab{B_{IV}}
& 
\leq
r^{-1}
\sup_{u\in\R^{d}}
\int_{\R^d}
\int_{\R^d}
\gamma(x)\ind{\abs{x}<T}
\gamma(y)\ind{\abs{y}<T}
\ind{\abs{(x-u)-(y+u)}<3r}
\,dx \, dy 
\\
& 
\leq
Cr^{-1-d}
\sup_{u\in\R^{d}}
\int_{\R^d}
\int_{\R^d}
\gamma(x)\ind{\abs{x}<T}
\gamma(y)\ind{\abs{y}<T}
\times
\\
&\hskip40mm
\big(
\int_{\R^d}
\ind{\abs{x-(z+u)}<3r}
\ind{\abs{y-(z-u)}<3r}\, dz
\big)
\,dx \, dy 
\\
& 
=
Cr^{-1-d}
\sup_{u\in\R^{d}}
\int_{\R^d}
\big(
\int_{\R^d}
\gamma(x)\ind{\abs{x}<T}
\ind{\abs{x-(z+u)}<3r}
\,dx 
\big)
\times
\\
&\hskip35mm
\big(
\int_{\R^d}
\gamma(y)\ind{\abs{y}<T}
\ind{\abs{y-(z-u)}<3r}
\, dy \big)
\, dz
\\
& 
\leq
Cr^{-1-d}
\int_{\R^d}
\big(
\int_{\R^d}
\gamma(x)\ind{\abs{x}<T}
\ind{\abs{x-z}<3r}
\,dx 
\big)^2
\, dz
\\
& 
\leq
C r + C r^{d-1}(\ind{d=3} T+ \ind{d=4} \log T+ \ind{d\geq5}).
\end{align*}
The last step follows from straightforward computations. 
Finally, using \eqref{rT<1} we get 
\begin{align}
\label{probab-BIV}
\probab{B_{IV}}
\leq 
Cr.
\end{align}
Putting together \eqref{probab-BI}, \eqref{probab-BII}, \eqref{probab-BIII} and \eqref{probab-BIV}, we obtain
\eqref{greenbound2}. 
\qed

\vskip2cm

\hbox{
\hskip7cm
\vbox{\hsize=9cm\noindent
{\sc Author's address:}
\\
Alfr\'ed R\'enyi Institute of Mathematics
\\
Re\'altanoda utca 13-14
\\
Budapest 1053, Hungary
\\
{\tt toth.balint@renyi.hu}
\\
and
\\
School of Mathematics, 
University of Bristol
\\
Fry Building, Woodland Road 
\\
Bristol BS8 1UG, UK
\\
{\tt balint.toth@bristol.ac.uk}
}
}


\begin{thebibliography}{99}

\bibitem{billingsley-1968}
P. Billingsley:
{\sl Convergence of Probability Measures.}
Wiley, New York (1968)


\bibitem{boldrighini-bunimovich-sinai-1983}
C. Boldrighini, L.A. Bunimovich, Y.G. Sinai: 
On the Boltzmann equation for the Lorentz gas.
{\sl J. Stat. Phys.} {\bf 32:} 477-501 (1983)

\bibitem{bordenave-caputo-salez-2018}
C. Bordenave, P. Caputo, J. Salez: 
Random walk on sparse random digraphs. 
{\sl Probab. Theory Relat. Fields} {\bf 170:} 933-960 (2018) 

\bibitem{gallavotti-1970}
G. Gallavotti: 
Rigorous theory of the Boltzmann equation in the Lorentz gas.
{\sl Nota Interna Univ di Roma} {\bf 358} (1970)

\bibitem{gallavotti-1999}
G. Gallavotti: 
{\sl Statistical Mechanics. A Short Treatise.} 
Theoretical and Mathematical Physics Series.
Springer, Berlin (1999)



\bibitem{lutsko-toth-2020}
C. Lutsko, B. T\'oth:
Invariance principle for the random Lorentz gas - beyond the Boltzmann-Grad limit. 
{\sl Commun. Math. Phys.} {\bf 379} 589-632 (2020)

\bibitem{lutsko-toth-2021}
C. Lutsko, B. T\'oth:
Invariance principle for the random Wind-Tree Process. 
{\sl Ann. H. Poincar\'e} {\bf 22:} 3357-3389 (2021)

\bibitem{lutsko-toth-2024}
C. Lutsko, B. T\'oth:
Diffusion of the random Lorentz process in a magnetic field.
{\tt arXiv:2411.03984} (2024)

\bibitem{revesz-1990}
P. R\'ev\'esz: 
{\sl Random Walk in Random and Non-Random Environment.}
World Scientific, 1990 (Second edition: 2005)

\bibitem{spohn-1978}
H. Spohn: 
The Lorentz process converges to a random flight process. 
{\sl Comm. Math. Phys.} {\bf 60:} 277-290 (1978)

\bibitem{spohn-1991}
H. Spohn: 
{\sl Large Scale Dynamics of Interacting Particles}
Springer, Berlin-Heidelberg, 1991


\bibitem{stone-1963}
C. Stone: 
Weak convergence of stochastic processes defined on a semi-infinite time interval. 
{\sl Proc. Amer. Math. Soc.} {\bf 14:} 694-696 (1963)

\bibitem{whitt-1970}
W. Whitt: 
Weak convergence of probability measures on the function space $C[0,\infty)$ 
{\sl Ann. Math. Statist.} {\bf 41:} 939-944 (1970)



\end{thebibliography}
\end{document}